\theoremstyle{plain} 
\newtheorem{theorem}{Theorem}[section]
\newtheorem{lemma}[theorem]{Lemma}
\newtheorem{corollary}[theorem]{Corollary}
\theoremstyle{definition} 
\newtheorem{definition}[theorem]{Definition}
\newtheorem{remark}[theorem]{Remark}
\newcommand{\bsa}{\boldsymbol{a}}
\newcommand{\bsc}{\boldsymbol{c}}
\newcommand{\bse}{\boldsymbol{e}}
\newcommand{\bsg}{\boldsymbol{g}}
\newcommand{\bsk}{\boldsymbol{k}}
\newcommand{\bsx}{\boldsymbol{x}}
\newcommand{\bsy}{\boldsymbol{y}}
\newcommand{\bszero}{\boldsymbol{0}}
\newcommand{\bsone}{\boldsymbol{1}}
\newcommand{\bsdelta}{\boldsymbol{\delta}}
\newcommand{\FF}{\mathbb{F}}
\newcommand{\NN}{\mathbb{N}}
\newcommand{\RR}{\mathbb{R}}
\newcommand{\ZZ}{\mathbb{Z}}
\newcommand{\Scal}{\mathcal{S}}
\providecommand{\keywords}[1]
{
  \noindent \small	
  \textbf{Keywords:} #1
}
\providecommand{\amscode}[1]
{
  \noindent \small	
  \textbf{AMS subject classifications:} #1
}
\begin{document}

\title{On the quasi-uniformity properties of quasi-Monte Carlo point sets and sequences -- Part~II: digital nets and sequences
\thanks{The work of J.~D. is supported by ARC grant DP220101811.
The work of T.~G. is supported by JSPS KAKENHI Grant Number 23K03210.
The work of K.~S. is supported by JSPS KAKENHI Grant Number 24K06857.}}
\author{Josef Dick\thanks{School of Mathematics and Statistics, UNSW Sydney, Kensington, NSW 2052, Austrlia (\url{josef.dick@unsw.edu.au})}, 
Takashi Goda\thanks{Graduate School of Engineering, The University of Tokyo, 7-3-1 Hongo, Bunkyo-ku, Tokyo 113-8656, Japan (\url{goda@frcer.t.u-tokyo.ac.jp})},  
Kosuke Suzuki\thanks{Corresponding author, Faculty of Science, Yamagata University, 1-4-12 Kojirakawa-machi, Yamagata, 990-8560, Japan (\url{kosuke-suzuki@sci.kj.yamagata-u.ac.jp})}
}
\date{\today}
\maketitle

\keywords{Quasi-Monte Carlo, digital net, space-filling design, covering radius, separation radius, mesh ratio}
\amscode{05B40, 11K36, 11K45, 52C15, 52C17}

\begin{abstract}
We study the quasi-uniformity properties of digital nets, a class of quasi-Monte Carlo point sets. Quasi-uniformity is a space-filling property used for instance in experimental designs and radial basis function approximation. However, it has not been investigated so far whether common low-discrepancy digital nets are quasi-uniform, with the exception of the two-dimensional Sobol' sequence, which has recently been shown not to be quasi-uniform. 
In this paper, with the goal of constructing quasi-uniform low-discrepancy digital nets, we introduce the notion of \emph{well-separated} point sets and provide an algebraic criterion to determine whether a given sequence of digital nets is well-separated. Using this criterion, we present an example of a two-dimensional digital net which has low-discrepancy and is quasi-uniform. Additionally, we provide several counterexamples of low-discrepancy digital nets that are not quasi-uniform.
\end{abstract}

\sloppy

\section{Introduction}

It is natural to ask whether strong distributional uniformity in structured quasi-Monte Carlo (QMC) sampling \cite{DKS13,LePi14,DKP22,SJ94,DP10,N92} can be reconciled with geometric space-filling properties.
This work constitutes Part~II of our study on the two important structured QMC families: Part~I \cite{DGLPSxx} treats lattice-type point sets and Kronecker sequences, while the present paper focuses on digital nets and sequences in $[0,1]^d$.
We investigate when such digital constructions can simultaneously exhibit low-discrepancy features and \emph{quasi-uniformity}.
Here and throughout, we use quasi-uniformity to mean that the mesh ratio (the covering radius divided by the separation radius) is uniformly bounded, so the points neither cluster excessively nor leave large holes at the relevant scale; precise definitions are given in Section~2.2, and broader motivation and context are developed in Part~I \cite{DGLPSxx}.
Further motivation and applications can be found, for example, in the design of computer experiments \cite{FLS06,PM12,SWN03} and scattered data approximation \cite{Sch95,W05,SW06,T20,WBG21}.

Our main contributions are twofold.
First, we obtain predominantly \emph{negative} results: for several explicit and widely used families of digital $(0,m,d)$-nets (hence strong low-discrepancy guarantees),
we prove directly that they fail to be quasi-uniform.
In particular, their digital (algebraic) structure does not, by itself, enforce favorable geometric separation behavior.
This stands in sharp contrast to the lattice setting, where Part~I provides explicit positive examples in which low-discrepancy and quasi-uniformity coexist.

Second, we show that coexistence is nevertheless possible within the digital framework in dimension two.
To this end, we introduce the notion of \emph{well-separated} point sets tailored to digital nets, and we establish an algebraic criterion to verify well-separatedness for sequences of digital nets.
Using this criterion, we give an explicit two-dimensional example that is simultaneously low-discrepancy and quasi-uniform.
Constructing low-discrepancy, quasi-uniform digital nets and sequences in dimensions $d\ge 3$ remains open.

The rest of this paper is organized as follows. 
In the next section, we introduce the definition of digital nets and sequences, and explain the concept of quasi-uniformity for point sets in the unit cube, briefly reviewing some known results in this research direction. 
In Section~\ref{sec:well-separated}, to capture the quasi-uniformity of digital nets, we introduce the notion of \emph{well-separated} point sets and provide an algebraic criterion to determine whether a given sequence of digital nets is well-separated.
This section also includes an example of a two-dimensional digital net that has low-discrepancy and is quasi-uniform.
Section~\ref{sec:not-well-separated} provides several counterexamples of low-discrepancy digital nets that are not quasi-uniform, going beyond the example recently shown in \cite{G24a}, namely, that the two-dimensional Sobol' sequence is not quasi-uniform.

\section{Preliminaries}

In the following we will use $P, P_i, \ldots$ to denote generic point sets and $Q, Q_m, \ldots$ to denote QMC point sets or sequences.

\subsection{Digital nets and sequences}

Throughout this paper, let $b$ be a prime number and let $\FF_b$ be the finite field of order $b$. 
Digital nets are defined as follows:

\begin{definition}[Digital net]\label{def:digitalnet}
For a positive integer $m$, let $F_1, F_2, \ldots, F_d \in \FF_b^{m \times m}$ be matrices over the finite field $\FF_b$. 
For $n \in \{0, 1, 2, \ldots, b^m-1\}$ with $b$-adic expansion 
\[ n = n_0 + n_1 b + \cdots + n_{m-1} b^{m-1},\]
we set $\vec{n} = (n_0, n_1, \ldots, n_{m-1})^\top$, where we identify $n_0, n_1, \ldots, n_{m-1} \in \{0, 1, \ldots, b-1\}$ with the elements in $\FF_b$. 
Then, let $\vec{x}_{n,j} = (x_{n,j,1}, x_{n,j,2}, \ldots, x_{n,j,m})^\top = F_j \vec{n}$ and set 
\begin{equation} \label{eq:x_expand}
x_{n,j} = \frac{x_{n,j,1}}{b} + \frac{x_{n,j,2}}{b^{2}} + \cdots + \frac{x_{n,j,m}}{b^{m}}\in [0,1),
\end{equation}
and $\bsx_n = (x_{n,1}, x_{n,2}, \ldots, x_{n,d})^\top$. 
Then, we call $Q(F_1, \ldots, F_d) = \{\bsx_0, \bsx_1, \ldots, \bsx_{b^m-1}\}$ the digital net generated by $F_1, \ldots, F_d$ over $\FF_b$.
\end{definition}

We also consider digitally shifted digital nets, which are defined as follows:

\begin{definition}[Digitally shifted digital net]
For a positive integer $m$, let $Q(F_1, \ldots, F_d)$ be the digital net generated by $F_1, \ldots, F_d\in \FF_b^{m\times m}$. Let $\bsdelta = (\delta_1,\dots,\delta_d) \in \{0, 1/b^m, 2/b^m, \ldots, (b^m-1)/b^m\}^d$ with $b$-adic expansion
\[
\delta_j = \frac{\delta_{j,1}}{b} + \frac{\delta_{j,2}}{b^{2}} + \cdots + \frac{\delta_{j,m}}{b^{m}}
\] 
for $1 \le j \le d$.
Then, the digitally shifted digital net $Q(F_1, \ldots, F_d) \oplus \bsdelta$ (with depth $m$) is given by 
\begin{equation*}
Q(F_1, \ldots, F_d) \oplus \bsdelta = \{\bsx_0 \oplus \bsdelta, \bsx_1 \oplus \bsdelta, \ldots, \bsx_{b^m-1} \oplus \bsdelta \},
\end{equation*}
where $\bsx_n \oplus \bsdelta = (x_{n,1} \oplus \delta_1, \ldots, x_{n,d} \oplus \delta_d)^\top$ and
\begin{equation*}
x_{n,j} \oplus \delta_j := \frac{y_{n,j,1}}{b} + \cdots + \frac{y_{n,j,m}}{b^m},
\end{equation*}
and $y_{n,j,k} = x_{n,j,k} \oplus \delta_{j,k} = x_{n,j,k} + \delta_{j,k} \pmod{b}$, where $x_{n,j,k}$ is given as in \eqref{eq:x_expand}.
\end{definition}

To generate an infinite sequence of points, the definition of digital nets is extended as follows:
\begin{definition}[Digital sequence]\label{def:digitalseq}
Let $F_1, F_2, \ldots, F_d \in \FF_b^{\infty \times \infty}$ be matrices over the finite field, where $F_j=(f_{k,\ell}^{(j)})_{k,\ell \ge 1}$. 
We assume that, for all $j$ and $\ell$, we have $f_{k,\ell}^{(j)}=0$ for all sufficiently large $k$.
For an integer $n \ge 0$ with the $b$-adic expansion
\[ n = n_0 + n_1 b + \cdots ,\]
where the coefficients $n_0, n_1, \ldots\in \{0, 1, \ldots, b-1\}$ are identified with the elements in $\FF_b$, we set $\vec{n} = (n_0, n_1, \ldots )^\top\in \FF_b^{\infty}$.
Then, let $\vec{x}_{n,j} = (x_{n,j,1}, x_{n,j,2}, \ldots,)^\top = F_j \vec{n}$ and set 
\[ x_{n,j} = \frac{x_{n,j,1}}{b} + \frac{x_{n,j,2}}{b^{2}} + \cdots \in [0,1),\] 
and $\bsx_n = (x_{n,1}, x_{n,2}, \ldots, x_{n,d})^\top$. 
Then, we call $Q(F_1, \ldots, F_d) = \{\bsx_0, \bsx_1, \ldots \}$ the digital sequence generated by $F_1, \ldots, F_d$ over $\FF_b$.
\end{definition}

From the definitions of digital nets and sequences, it is clear that the generating matrices $F_1,\ldots,F_d$ determine how well-distributed the corresponding point set or sequence is.
The $t$-value, introduced in the next section, is one of the central quantities used to measure the quality of $F_1,\ldots,F_d$.
Digital nets and sequences with smaller $t$-values have a smaller upper bound on the star discrepancy.
However, merely looking at the $t$-value is not sufficient to determine whether a given digital net or sequence is quasi-uniform.

\subsection{Quasi-Uniformity}

In this part we follow the terminology and notation introduced in Part~I \cite[Section~1.1]{DGLPSxx}.
For convenience, we recall their notation below.
Throughout, we work on the unit cube $\Omega=[0,1]^d$; hence we omit the dependence on $\Omega$ from the notation.

Let $P$ be a point set in $[0,1]^d$, and for $p\in [1,\infty]$, let $\|\cdot \|_p$ denote the $\ell_p$ norm. 
Define the \emph{covering radius} of $P$ (in $\ell_p$ norm) by
\[ h_p(P):=\sup_{\bsx\in [0,1]^d}\min_{\bsy\in P} \|\bsx-\bsy\|_p, \]
the \emph{separation radius} of $P$ (in $\ell_p$ norm) by
\[ q_p(P):=\min_{\substack{\bsx,\bsy\in P\\ \bsx\neq \bsy}} \frac{\|\bsx-\bsy\|_p}{2},\]
and the \emph{mesh ratio} of $P$ (in $\ell_p$ norm) by
\begin{equation*}
\rho_p(P) = \frac{h_p(P)}{q_p(P)}.
\end{equation*}
It is well known that $\rho_p(P) \ge 1$ holds true since $[0,1]^d$ is connected.
As stated in Appendix~A of Part~I \cite{DGLPSxx}, these radii satisfy the bounds
\begin{align*}
h_p(P) &\ge \frac{1}{N^{1/d}}\cdot \frac{(\Gamma(1+d/p))^{1/d}}{2\Gamma(1+1/p)},\\
q_p(P) &\le \frac{(\Gamma(1+d/p))^{1/d}}{2N^{1/d}\Gamma(1+1/p)-2(\Gamma(1+d/p))^{1/d}}.
\end{align*} 
Consequently, the mesh ratio $\rho_p(P)=h_p(P)/q_p(P)$ is bounded whenever $h_p(P)=O(N^{-1/d})$ and $q_p(P)=\Theta(N^{-1/d})$.

Then quasi-uniformity for the set of point sets and for the sequence of points is defined as follows:
\begin{definition}\label{def_qu_pointsets}
    Let $X$ be a set of point sets in $\Omega$ with $\sup_{P \in X} |P| = \infty$.
    Then we call $X$ \emph{a quasi-uniform family} if there exists a constant $C_{p} >0$ such that the mesh ratio is bounded, i.e.
    \[ \rho_p(P) \leq C_{p}, \quad \forall P  \in X.\]
\end{definition}

\begin{definition}\label{def:qu-points}
Let $\Scal = (\bsx_n)_{n \ge 0}$ be an infinite sequence of points. We denote the set of the first $i$ points of $\Scal$ by $P_i =\{\bsx_0, \bsx_1, \ldots, \bsx_{i-1}\}$. Then, we call $\Scal$ a \emph{quasi-uniform sequence} if the set $\{P_i\}_{i \in \NN}$ is a quasi-uniform family, i.e., there is a constant $C'_p > 0$ such that \[ \rho_p(P_i) \le C'_p, \quad \forall i \in \NN.\]
We call the sequence $\Scal$ \emph{quasi-uniform along the subsequence} $1 \le i_1 < i_2 < \dots$ if the mesh ratio is bounded for all $i_k$, i.e., $\rho_p(P_{i_k}) \le C'_p$ for all $k$.
\end{definition}

Note that one can choose 
$C_p=2$ independently of 
$d$ \cite{PZ23}.
As stated in Part I, in our setting the constants usually depend on 
$d$. We leave the dimension dependence of the mesh ratio for QMC point sets for future work.

Since $\bsx, \bsy \in [0,1]^d$ are finite dimensional vectors, all norms $\| \cdot \|$ are equivalent, hence if a sequence of point sets is quasi-uniform for one $p$, it is quasi-uniform for all $p \in [1,\infty]$.

\subsection{Some known results for digital nets and sequences}
We have already introduced the overall literature and results in Part I, so here we will present those related to digital nets.

We have already stated in Part~1 (without proof) that the van der Corput sequence in base $b$ is not only of low discrepancy but also quasi-uniform. We will give a brief proof for the quasi-uniformity. 
The van der Corput sequence in base $b$ is the digital sequence 
whose generating matrix is the $\infty \times \infty$ identity matrix.
Recalling that the set of the first $b^m$ points is given by $\{i/b^m\mid i=0,\ldots,b^m-1\}$, for any $b^{m-1}<i\le b^m$,
the first $i$ points of the van der Corput sequence, denoted by $Q_i$, satisfy
\[ h_p(Q_i)\le h_p(Q_{b^{m-1}})=\frac{1}{ b^{m-1}} \quad \text{and}\quad q_p(Q_i)\ge q_p(Q_{b^{m}})=\frac{1}{2b^{m}}. \]
Therefore, the mesh ratio is bounded independently of $i$ as
\[ \rho_p(Q_i) = \frac{h_p(Q_i)}{q_p(Q_i)}\le 2b. \]
This means that the van der Corput sequence is quasi-uniform. 
With the additional point at $x=1$, the mesh ratio is uniformly bounded by $b$, which proves that the optimal mesh ratio of $2$, shown in \cite[Theorem~1.1]{PZ23}, can be achieved in the case $b=2$.

We have stated in Part I that the covering radius has been well studied under the name of \emph{dispersion} \cite[Chapter~6]{N92}.
In particular, any $(t,m,d)$-net $Q_{b^m}$ in base $b$, or the first $b^m$ points from any $(t,d)$-sequence $Q_{b^m}$ in base $b$, satisfy the following upper bound on the covering radius:
\[ h_{\infty}(Q_{b^m})\leq b^{(d+t-1)/d}b^{-m/d}, \]
which is of optimal order. For a digital sequence, \cite[Lemma~1.3]{DGLPSxx}
shows that the first $N$ points $Q_N$ of the digital sequence for any $N \ge 2$ satisfies $h_\infty(Q_N) \le b^{(d+t)/d} N^{-1/d}$.

For $d=2$, the separation radius of several $(0,m,2)$-nets was studied in \cite{GHSK08,GK09}, with particular interest in the \emph{toroidal distance}:
\[ \|\bsx-\bsy\|_{p,T}:=\left(\sum_{j=1}^{d}\left( \min(|x_j-y_j|,1-|x_j-y_j|)\right)^p\right)^{1/p},\]
for $p\in [1,\infty)$, with the obvious modification for $p=\infty$.
Numerical experiments therein suggest that the sequence of the Larcher--Pillichshammer digital $(0,m,2)$-nets from \cite{LP01} is quasi-uniform.
In this paper, we theoretically prove a stronger result: any digital shift preserves the quasi-uniformity of these nets (with respect to the toroidal distance).
In what follows, when we work with the covering radius, separation radius, and mesh ratio with respect to the toroidal distance, we denote them by $h_{p,T}, q_{p,T}$, and $\rho_{p,T}$, respectively. 
Note that, from the definition of the toroidal distance, both $h_{p,T}(P)\le h_{p}(P)$ and $q_{p,T}(P)\le q_{p}(P)$ hold for any $P\subset [0,1]^d$.

\section{Well-separated nets}\label{sec:well-separated}

\subsection{Basic concepts and properties}

We introduce the concept of $(t,m,d)$-nets in base $b$.

\begin{definition}
    Let $b\geq 2$ be an integer. A $d$-dimensional, $b$-adic elementary interval is an interval of the form
    \[ J_{\bsc,\bsa}:=\prod_{j=1}^{d}\left[ \frac{a_j}{b^{c_j}}, \frac{a_j+1}{b^{c_j}}\right)\]
    with integers $0\leq a_j<b^{c_j}$ and $c_j\geq 0$ for all $j$.
\end{definition}
A set $Q\subset [0,1)^d$ with $N=b^m$ points is called $(t,m,d)$-net in base $b$ if exactly $b^t$ points of $Q$ exist in each of $b^{m-t}$ elementary intervals for any choice of $c_1,\ldots,c_d$ satisfying $c_1+\cdots+c_d=m-t$.

Let $Q$ be a $(t,m,d)$-net in base $b$. 
As already mentioned in the previous section, it has been known from \cite[Theorem~6.10]{N92} that 
\begin{equation}
h_{\infty,T}(Q)\le h_{\infty}(Q) \le \frac{b^{(d+t-1)/d}}{b^{m/d}}, \label{eq:CoveringRadius-tmsnet}    
\end{equation}
implying that $(t,m,d)$-nets in base $b$ achieve the optimal order of the covering radius. To discuss the separation radius, we extend the notion of elementary intervals.

\begin{definition}
Let $b\geq 2$ be an integer. A shifted $b$-adic elementary interval is an interval of the form
    \[ J_{\bsc,\bsa, \bse}:=\prod_{j=1}^{d}\left[ \frac{a_j-e_j/b}{b^{c_j}}, \frac{a_j+(b-e_j)/b}{b^{c_j}}\right)\]
with integers $0\leq a_j \le b^{c_j}$, $c_j\geq 0$ and $e_j \in \{0, 1, 2, \ldots, b-1\}$ for all $j$.

Furthermore, 
a shifted toroidal $b$-adic elementary interval is an interval of the form
    \[ \tilde{J}_{\bsc,\bsa, \bse}:=\prod_{j=1}^{d}\left( \left[ \frac{a_j-e_j/b}{b^{c_j}}, \frac{a_j+(b-e_j)/b}{b^{c_j}}\right) \bmod 1 \right) \]
with integers $0\leq a_j < b^{c_j}$, $c_j\geq 0$ and $e_j \in \{0, 1, 2, \ldots, b-1\}$ for all $j$,
where, for a set $A \subset \RR$ we define \(
A \bmod 1 := \{x \in [0,1) \mid \text{there exists a $z \in \ZZ$ s.t. } x+z \in A \}.
\)
\end{definition}

Note that $\tilde{J}_{c,a,e} = J_{c,a,e}$
unless $a = 0$ and $e = \{1,\dots, b-1\}$.
We also note that $J_{\bsc,\bsa,\bse}$ is not necessarily contained in $[0,1)^d$ but $\tilde{J}_{\bsc,\bsa,\bse}$ is.
We now introduce the concept of $\bsc$-separated and $\kappa$-separated point sets in base $b$.

\begin{definition}\label{def_well_sep_net}
Let $P \subset [0,1)^d$ be a finite point set.
We call $P$ a $\bsc$-separated point set in base $b$
if every shifted $b$-adic elementary interval $J_{\bsc,\bsa,\bse}$
with $0 \le a_j \le b^{c_j}$ and $e_j \in \{0,1,\ldots,b-1\}$ for $j=1,\ldots,d$
contains at most one point of $P$.
If $P$ is a $(\kappa,\ldots,\kappa)$-separated point set,
then we call $P$ a {\it $\kappa$-separated point set} in base $b$.

Furthermore, we call a point set $P \subset [0,1)^d$ a
toroidally $\bsc$-separated point set in base $b$
if every toroidal shifted $b$-adic elementary interval $\tilde{J}_{\bsc,\bsa,\bse} \subseteq [0,1)^d$
with $0 \le a_j < b^{c_j}$ and
$e_j \in \{0,1,\ldots,b-1\}$ for $j=1,\ldots,d$
contains at most one point of $P$.
If $P$ is a toroidally $(\kappa,\ldots,\kappa)$-separated point set,
then we call $P$ a toroidally $\kappa$-separated point set in base $b$.
\end{definition}

The following lemma is to connect $\bsc$-separated and $\kappa$-separated.

\begin{lemma}\label{lem:c2kappa}
Let $P \subset [0,1)^d$ be a finite point set.
Let $\bsc,\bsc',\bsc''$ be three distinct vectors in $\NN_0^d$ such that $c'_j \le c_j \le c''_j$ for all $j$.
Then the following hold:
\begin{enumerate}
    \item If $P$ is not (toroidally or non-toroidally) $\bsc$-separated, then $P$ is not $\bsc'$-separated, and in particular, not $\kappa$-separated with $\kappa = \min(c_1,\ldots,c_d)$. \label{item1:c2kappa}
    \item If $P$ is $\bsc$-seaprated, then $P$ is $\bsc''$-separated, and in particular, $\kappa$-separated with $\kappa = \max(c_1,\ldots,c_d)$.
    \label{item2:c2kappa}
\end{enumerate}
\end{lemma}

\begin{proof}
First, for any admissible parameters $\bsa,\bse$, we explicitly construct
$\bsa',\bse'$ such that
$J_{\bsc,\bsa,\bse} \subset J_{\bsc',\bsa',\bse'}.$
Indeed, for $j \in \{1,\ldots,d\}$, define
\[
m_j := \left\lfloor \frac{b a_j - e_j }{b^{c_j-c'_j}} \right\rfloor
      \in \{0,1,\ldots,b^{c'_j+1}-1\},
\]
and write $m_j = b a'_j - e'_j$ with $0 \le e'_j \le b-1$. This choice of $\bsa',\bse'$, satisfies $J_{\bsc,\bsa,\bse} \subset J_{\bsc',\bsa',\bse'}$.
The same construction applies verbatim to the toroidal intervals $\tilde J_{\bsc,\bsa,\bse}$.

We now show \eqref{item1:c2kappa}.
we assume $P$ is not $\bsc$-separated, i.e., there exists some $\bsa$ and $\bse$ such that
$J_{\bsc,\bsa,\bse}$ contains at least two points from $P$. We take $\bsa'$ and $\bse'$ as above such that $J_{\bsc,\bsa,\bse} \subset J_{\bsc',\bsa',\bse'}$.
Since $J_{\bsc',\bsa',\bse'}$ also contains at least two points from $P$, $P$ is not $\bsc'$-separated.

\eqref{item2:c2kappa} is the contrapositive of
\eqref{item1:c2kappa}.
\end{proof}

For a fixed point set, a smaller value of $\kappa$ indicates better quality.
The next lemma shows that, for $\kappa$-separated point sets, all the points are at least distance $(b-1)b^{-\kappa}/(2b)$ apart in the $\ell_{\infty}$ norm, which motivates the name ``$\kappa$-separated''.

\begin{lemma}\label{lem_well_sep}
Let $P \subset [0,1)^d$ be a $\kappa$-separated point set in base $b$. Then we have
\begin{equation*}
q_\infty(P) \ge \frac{b-1}{2b} b^{-\kappa}.
\end{equation*}
\end{lemma}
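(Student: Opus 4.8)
The plan is to prove the contrapositive. Since $q_\infty(P) = \min_{\bsx \neq \bsy} \|\bsx - \bsy\|_\infty/2$, the claimed bound $q_\infty(P) \ge \frac{b-1}{2b}b^{-\kappa}$ is equivalent to showing that any two distinct points $\bsx, \bsy \in P$ satisfy $\|\bsx - \bsy\|_\infty \ge \frac{b-1}{b}b^{-\kappa}$. So I would suppose, for contradiction, that there exist distinct $\bsx, \bsy \in P$ with $|x_j - y_j| < \frac{b-1}{b}b^{-\kappa}$ in every coordinate $j$, and then produce a single shifted elementary interval $J_{\bsc,\bsa,\bse} \subseteq [0,1)^d$ (for the distinguished $\bsc$ supplied by $\kappa$-separation) that contains both points, contradicting Definition~\ref{def_well_sep_net}.

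Fix the vector $\bsc$ with $0 \le c_j \le \kappa$ guaranteed by the hypothesis that $P$ is $\kappa$-separated. Because $c_j \le \kappa$, the assumed gap satisfies $|x_j - y_j| < \frac{b-1}{b}b^{-\kappa} \le \frac{b-1}{b}b^{-c_j} = \frac{b-1}{b^{c_j+1}}$ in each coordinate. The core of the argument is then a one-dimensional construction: for each $j$, I want to exhibit an interval of the form $[\frac{a_j - e_j/b}{b^{c_j}}, \frac{a_j+(b-e_j)/b}{b^{c_j}})$ --- which has length exactly $1/b^{c_j}$ --- that contains both $x_j$ and $y_j$ and lies in $[0,1)$. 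Writing $m_j = \min(x_j, y_j)$ and $M_j = \max(x_j, y_j)$, the admissible left endpoints $L$ of a length-$1/b^{c_j}$ interval covering both points form the range $(M_j - 1/b^{c_j}, m_j]$, whose length is $1/b^{c_j} - (M_j - m_j) > 1/b^{c_j} - \frac{b-1}{b^{c_j+1}} = \frac{1}{b^{c_j+1}}$. Since the left endpoints realizable as $\frac{a_j b - e_j}{b^{c_j+1}}$ lie on a grid of spacing exactly $1/b^{c_j+1}$, and the admissible range is strictly longer than this spacing, a grid-aligned admissible left endpoint always exists.

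I would then pin down the endpoint explicitly, for instance by taking $L_j = \frac{1}{b^{c_j+1}}\min(\lfloor m_j b^{c_j+1}\rfloor,\, b^{c_j+1}-b)$, and verify that $[L_j, L_j + 1/b^{c_j})$ contains both coordinates and satisfies $0 \le L_j \le 1 - 1/b^{c_j}$, so that it sits inside $[0,1)$. Finally, every integer $k$ with $0 \le k \le b^{c_j+1} - b$ can be written as $k = a_j b - e_j$ with $a_j \in \{0,\dots,b^{c_j}\}$ and $e_j \in \{0,\dots,b-1\}$ (take $a_j = \lceil k/b\rceil$ and $e_j = a_j b - k$), so writing $L_j = k_j/b^{c_j+1}$ identifies valid indices $a_j, e_j$. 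Taking the product over $j$ yields a shifted elementary interval $J_{\bsc,\bsa,\bse} \subseteq [0,1)^d$ containing the two distinct points $\bsx$ and $\bsy$, the desired contradiction.

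The step I expect to be the main obstacle is the careful treatment of the boundary: when $m_j$ and $M_j$ lie very close to $0$ or to $1$, I must ensure the selected length-$1/b^{c_j}$ interval does not protrude outside $[0,1)$ while remaining grid-aligned and still covering both points. The clipping built into $L_j$ (the $\min$ with $b^{c_j+1}-b$, together with nonnegativity of the floor) is designed precisely to handle these two extremes, and checking that the covering property survives the clipping at each end is where the one genuinely delicate verification lies.
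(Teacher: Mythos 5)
Your proof is correct, but it takes a genuinely different route from the paper's. The paper argues directly: it fixes $\bsx \in P$, truncates each coordinate to $\kappa+1$ digits to obtain $x_j^*$, and forms the $2^d$ shifted elementary intervals $\prod_j I_{j,k_j}$ with $I_{j,1} = [x_j^* - (b-1)/b^{\kappa+1},\, x_j^* + 1/b^{\kappa+1})$ and $I_{j,2} = [x_j^*,\, x_j^* + 1/b^{\kappa})$; each contains $\bsx$, hence no other point of $P$, and their union is the box $\prod_j [x_j^* - (b-1)/b^{\kappa+1},\, x_j^* + 1/b^{\kappa})$, which yields the distance bound. You instead argue by contradiction, manufacturing a \emph{single} shifted elementary interval containing two putative close points via a pigeonhole on grid-aligned left endpoints: the admissible left endpoints form a half-open interval of length strictly greater than the grid spacing $b^{-(c_j+1)}$, so a realizable endpoint exists. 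Both arguments are sound, and your boundary verification does go through (when $\lfloor m_j b^{c_j+1}\rfloor > b^{c_j+1}-b$ one has $m_j > 1 - b^{-c_j}$, so the clipped interval $[1-b^{-c_j},1)$ still captures both coordinates). Your version buys two things: it works verbatim with the witness $\bsc$ of Definition~\ref{def_well_sep_net} even when some $c_j < \kappa$, whereas the paper tacitly takes $c_j = \kappa$ in every coordinate; and it handles the constraint $J_{\bsc,\bsa,\bse} \subseteq [0,1)^d$ explicitly, a point the paper's proof passes over when $x_j^* - (b-1)/b^{\kappa+1} < 0$. The paper's version is shorter and gives the cleaner geometric picture of an excluded $\ell_\infty$-box around each point of $P$.
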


\begin{proof}
Let $\bsx = (x_1,\dots,x_d) \in P$.  
Let the base $b$ representation of $x_j$ be given by
\begin{equation*}
x_j = \frac{x_{j,1}}{b} + \frac{x_{j,2}}{b^2} + \cdots,
\end{equation*}
unique in the sense that infinitely many of the $x_{j,k}$ are different from $b-1$. 
For each $j=1,\dots,d$, we define
\begin{align}
x^*_j &:= \frac{x_{j,1}}{b} + \dots + \frac{x_{j,\kappa+1}}{b^{\kappa+1}}, \notag\\
I_{j,1} &:= [x^*_j - (b-1)/b^{\kappa+1}, x^*_j + 1/b^{\kappa+1}) = J_{\kappa,a_{j,1},e_{j,1}} \label{eq:I1}\\
I_{j,2} &:= [x^*_j,x^*_j + 1/b^{\kappa}) = J_{\kappa,a_{j,2},e_{j,2}}, \label{eq:I2}
\end{align}
where $a_{j,1},e_{j,1},a_{j,2},e_{j,2}$ satisfies
$ba_{j,1}-e_{j,1} = b^{\kappa+1}x_j^*-(b-1)$ and 
$ba_{j,2}-e_{j,2} = b^{\kappa+1}x_j^*$.
Since $x_j^* \in I_{j,k}$ holds for any $1 \le j \le d$ and $k \in \{1,2\}$,
for any $\bsk = (k_1,\dots,k_d) \in \{1,2\}^d$ we have
\[
\bsx \in \prod_{j=1}^d I_{j,k_j} =: I_{\bsk}.
\]
From \eqref{eq:I1} and \eqref{eq:I2},
$I_{\bsk}$ is a shifted $b$-adic elementary interval of the form $J_{(\kappa,\dots,\kappa),\bsa,\bse}$
for any $\bsk \in \{1,2\}^d$.
Since $P$ is $\kappa$-separated,
for every $\bsy \in P \setminus \{\bsx\}$ we have $\bsy \notin I_{\bsk}$,
and thus 
\[
\bsy \notin \bigcup_{\bsk \in \{1,2\}^d} I_{\bsk}
=  \prod_{j=1}^d[x^*_j - (b-1)/b^{\kappa+1}, x^*_j + 1/b^{\kappa}).
\]
Thus, noting that $x_j^*\le x_j < x_j^*+1/b^{\kappa+1}$ holds for all $j$, we have $\max_{1 \le j \le d}|x_j - y_j| \ge (b-1)/b^{\kappa+1}$
and the result follows.
\end{proof}

Similarly, we can show that the separation radius of toroidally $\kappa$-separated point sets remains large (with respect to the toroidal distance) even when any shift is applied to these point sets.
Since the proof goes in the same way, we omit the proof.
\begin{lemma}\label{lem_well_sep_shifted}
Let $P \subset [0,1)^d$ be a toroidally $\kappa$-separated point set.
Let $\bsdelta \in [0,1)^d$ and 
$P+\bsdelta := \{\bsx+\bsdelta \bmod{1} \mid \bsx \in P\}$ be the shifted point set in $[0,1)^d$.
Then, for any $\bsdelta$, we have
\begin{equation*}
q_{\infty,T}(P+\bsdelta) \ge \frac{b-1}{2b} b^{-\kappa}.
\end{equation*}
\end{lemma}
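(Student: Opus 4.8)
The plan is to separate the statement into two parts: the genuinely new role of the shift, and a transcription of the argument already carried out for Lemma~\ref{lem_well_sep}. The key observation about the shift is that the per-coordinate toroidal metric $d_T(u,v) = \min(|u-v|,\,1-|u-v|)$ is invariant under the rotation $u \mapsto u + \delta_j \bmod 1$ of the circle $\RR/\ZZ$, since a rotation is an isometry of the circle. Hence, writing $\bsx' = \bsx + \bsdelta \bmod 1$ and $\bsy' = \bsy + \bsdelta \bmod 1$ for $\bsx,\bsy \in P$, I get $\|\bsx' - \bsy'\|_{\infty,T} = \|\bsx - \bsy\|_{\infty,T}$, and therefore $q_{\infty,T}(P + \bsdelta) = q_{\infty,T}(P)$ for every $\bsdelta \in [0,1)^d$. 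This reduces the claim to the shift-free bound $q_{\infty,T}(P) \ge \frac{b-1}{2b}b^{-\kappa}$.

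This remaining bound is proved exactly as Lemma~\ref{lem_well_sep}, with shifted $b$-adic elementary intervals replaced by their toroidal counterparts $\tilde J_{\bsc,\bsa,\bse}$. Fixing $\bsx = (x_1,\ldots,x_d) \in P$, I write each $x_j$ in its canonical base-$b$ expansion, let $x_j^*$ be the truncation after $\kappa+1$ digits, and take the witness $\bsc = (\kappa,\ldots,\kappa)$. For each $j$ I would introduce the two width-$b^{-\kappa}$ toroidal intervals
\[
\tilde I_{j,1} := \Big[x_j^* - \tfrac{b-1}{b^{\kappa+1}},\, x_j^* + \tfrac{1}{b^{\kappa+1}}\Big) \bmod 1, \qquad \tilde I_{j,2} := \Big[x_j^*,\, x_j^* + \tfrac{1}{b^{\kappa}}\Big) \bmod 1,
\]
and verify that each is of the form $\tilde J_{\bsc,\bsa,\bse}$ for admissible parameters $0 \le a_j < b^{\kappa}$ and $e_j \in \{0,\ldots,b-1\}$. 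Since $x_j \in [x_j^*, x_j^* + b^{-(\kappa+1)})$ lies in both $\tilde I_{j,1}$ and $\tilde I_{j,2}$, the point $\bsx$ belongs to $\tilde I_{\bsk} := \prod_{j} \tilde I_{j,k_j}$ for every $\bsk \in \{1,2\}^d$, and each $\tilde I_{\bsk}$ is a toroidal shifted elementary interval. By toroidal $\kappa$-separation each $\tilde I_{\bsk}$ contains at most one point of $P$, so no $\bsy \in P \setminus \{\bsx\}$ lies in any $\tilde I_{\bsk}$, hence none lies in
\[
\bigcup_{\bsk \in \{1,2\}^d} \tilde I_{\bsk} = \prod_{j=1}^d \Big( \big[x_j^* - \tfrac{b-1}{b^{\kappa+1}},\, x_j^* + \tfrac{1}{b^{\kappa}}\big) \bmod 1 \Big).
\]
Reading off a coordinate $j$ in which $\bsy$ escapes this toroidal box and using $x_j^* \le x_j < x_j^* + b^{-(\kappa+1)}$, I conclude $\|\bsx - \bsy\|_{\infty,T} \ge (b-1)/b^{\kappa+1}$, whence $q_{\infty,T}(P) \ge \frac{b-1}{2b}b^{-\kappa}$; combined with the shift-invariance above this finishes the proof.

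The one genuinely new ingredient relative to Lemma~\ref{lem_well_sep} is the shift, and it is dispatched cleanly by the isometry remark; everything else transcribes the earlier argument. The step requiring the most care is checking that $\tilde I_{j,1}$ and $\tilde I_{j,2}$ are bona fide toroidal shifted elementary intervals with parameters in the stated ranges, in particular when $x_j^*$ sits near $0$ or $1$ so that the interval truly wraps around the circle. I expect this to be the main (though still routine) obstacle: whereas the non-toroidal Lemma~\ref{lem_well_sep} must track whether an interval pokes outside $[0,1)$, here the $\bmod 1$ reduction built into $\tilde J_{\bsc,\bsa,\bse}$ absorbs the wrap-around automatically, so the boundary complications of the $\ell_\infty$ version disappear for the toroidal one.
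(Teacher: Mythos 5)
Your argument is correct and matches what the paper intends: the paper omits the proof of Lemma~\ref{lem_well_sep_shifted} entirely, remarking only that it ``goes in the same way'' as Lemma~\ref{lem_well_sep}, and your reconstruction --- translation invariance of the toroidal metric to dispose of $\bsdelta$, followed by the interval argument of Lemma~\ref{lem_well_sep} with $J_{\bsc,\bsa,\bse}$ replaced by $\tilde J_{\bsc,\bsa,\bse}$ --- is exactly that proof, with the wrap-around admissibility check correctly identified as the only point needing care.
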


We can also prove an inverse of the previous lemmas.

\begin{lemma}\label{lem:nonwell_is_smallsep}
Let $P \subset [0,1)^d$ be a finite point set. Let $\kappa \ge 0$ and assume that $P$ is not $\kappa$-separated. Then
\[ q_\infty(P) \le \frac{1}{2 b^{\kappa}}.\]
Similarly, assume that $P$ is not toroidally $\kappa$-separated. Then
\[ q_{\infty,T}(P) \le \frac{1}{2 b^{\kappa}}.\]
\end{lemma}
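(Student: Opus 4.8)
The plan is to prove both claims by directly unwinding Definition~\ref{def_well_sep_net}: I would negate the separatedness condition, specialise the resulting quantifier to the extremal vector $\bsc=(\kappa,\dots,\kappa)$, and read off two nearby points from a single small elementary interval.

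First I would negate the definition. Since $\kappa$-separatedness asserts the \emph{existence} of an admissible $\bsc\in\mathbb{N}_0^d$ (with $0\le c_j\le\kappa$) for which every fitting box $J_{\bsc,\bsa,\bse}\subseteq[0,1)^d$ contains at most one point, its failure means that \emph{for every} such $\bsc$ there exist $\bsa,\bse$ with $J_{\bsc,\bsa,\bse}\subseteq[0,1)^d$ containing at least two (hence distinct) points of $P$. Because the target bound carries the factor $b^{-\kappa}$, the efficient choice is the smallest admissible box, namely $\bsc=(\kappa,\dots,\kappa)$; this is admissible, so the negation hands me $\bsa,\bse$ and two distinct points $\bsx,\bsy\in P$ both lying in $J_{(\kappa,\dots,\kappa),\bsa,\bse}$. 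I would then record that each factor of this box is a half-open interval of length $((b-e_j)/b+e_j/b)/b^{\kappa}=b^{-\kappa}$, independent of $\bsa,\bse$, so the box is a half-open cube of side $b^{-\kappa}$. Two points inside it satisfy $|x_j-y_j|<b^{-\kappa}$ for every $j$, whence $\|\bsx-\bsy\|_\infty<b^{-\kappa}$; plugging this particular pair into the minimum defining $q_\infty(P)$ yields $q_\infty(P)\le\tfrac12\|\bsx-\bsy\|_\infty<\tfrac{1}{2b^{\kappa}}$, which is the first claim (in fact strictly).

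The toroidal statement runs in exact parallel, with $\tilde{J}$ in place of $J$ and again $\bsc=(\kappa,\dots,\kappa)$. The one place needing a little care---and the only (mild) obstacle---is the coordinatewise distance estimate on the torus. If $x_j,y_j$ both lie in a toroidal interval $[\alpha,\alpha+b^{-\kappa})\bmod 1$, I would choose representatives of $x_j$ and $y_j$ inside one half-open arc of length $b^{-\kappa}$; their ordinary separation is then $<b^{-\kappa}$, and since the toroidal distance $\min_{z\in\ZZ}|x_j-y_j+z|$ is no larger than this, it is also $<b^{-\kappa}$. This holds uniformly and needs no assumption such as $b^{-\kappa}\le 1/2$. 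Combining the $d$ coordinatewise bounds gives $\|\bsx-\bsy\|_{\infty,T}<b^{-\kappa}$ and hence $q_{\infty,T}(P)\le\tfrac12\|\bsx-\bsy\|_{\infty,T}<\tfrac{1}{2b^{\kappa}}$. I do not expect to need the earlier lemmas; the entire argument is a clean logical negation plus two elementary interval-length computations.
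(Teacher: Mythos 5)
Your proof is correct and follows essentially the same route as the paper: negate the existential in the definition, specialise to $\bsc=(\kappa,\dots,\kappa)$, observe that the resulting shifted elementary interval is a cube of side $b^{-\kappa}$, and bound the $\ell_\infty$ distance of two points inside it. The paper states the toroidal case is ``similar'' and omits it, whereas you spell out the representative-choice argument on the torus; that extra care is welcome but the underlying argument is identical.
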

\begin{proof}
We provide a proof for the first claim only, as the second claim can be proven in a similar manner.
Let $\bsc = (\kappa, \kappa, \ldots, \kappa)$. Since $P$ is not $\kappa$-separated, there exist $a_1, a_2 \ldots, a_d \in \mathbb{Z}$ such that $0 \le a_1, a_2, \ldots, a_d \le b^{\kappa}$ and $e_1, e_2, \ldots, e_d \in \{0, 1, \ldots, b-1\}$ such that $J_{\bsc, \bsa, \bse} \subseteq [0,1]^d$ and $|J_{\bsc,\bsa,\bse} \cap P | > 1$. Assume that $\bsx, \bsy \in P \cap J_{\bsc,\bsa,\bse}$. Then
\begin{equation*}
\|\bsx -\bsy\|_\infty = \max_{1 \le j \le d} |x_j - y_j| \le \max_{1 \le j \le d} \frac{a_j + (b-e_j)/b - a_j + e_j/b}{b^\kappa} = b^{-\kappa}.
\end{equation*}
Thus
\begin{equation*}
q_\infty(P) \le \frac{1}{2 b^{\kappa}}.
\qedhere
\end{equation*}
\end{proof}

Let $\{Q_{b^m}\}_{m \in \NN}$ be a set of point sets, where each $Q_{b^m}$ is a $(t,m,d)$-net in base $b$. 
For $\{Q_{b^m}\}_{m \in \NN}$ to be a quasi-uniform family, the separation radius $q_{\infty}(Q_{b^m})$ (resp. $q_{\infty,T}(Q_{b^m})$) should be bounded below by the covering radius $h_{\infty}(Q_{b^m})$ (resp. $h_{\infty,T}(Q_{b^m})$) up to a constant factor.
Given the upper bound on $h_{\infty,T}(Q_{b^m})$ and $h_{\infty}(Q_{b^m})$ shown in \eqref{eq:CoveringRadius-tmsnet}, $Q_{b^m}$ needs to be a $\kappa$-separated point set with $\kappa\approx m/d$.
This motivates the following definition of well-separated families and sequences.

\begin{definition}
Let $I \subset \NN$ be an infinite set and
$\{Q_{b^m}\}_{m \in I}$ be a family of point sets indexed by $I$ such that $Q_{b^m}$ is a $(t,m,d)$-net in base $b$. 
If for all $m \in I$, $Q_{b^m}$ is $\kappa_m$-separated with $\kappa_m \le m/d+O(1)$, then we call $(Q_{b^m})_{m \in \NN}$ \emph{well-separated}, or a \emph{well-separated family on $I$}.
A sequence of points $\Scal$ is said to be well-separated, or a well-separated sequence, if the set
$\{Q_{b^m}\}_{m \in \NN}$ of its initial point sets is a well-separated family on $\NN$, where
$Q_{b^m}$ denotes the first $b^m$ points of $\Scal$.

We define the term ``toroidally well-separated'' in the same way.
\end{definition}

If $(Q_{b^m})_{m \in I}$ is a well-separated family on certain index set $I$,
it follows from \eqref{eq:CoveringRadius-tmsnet} and Lemma~\ref{lem_well_sep} that
\[ \rho_\infty(Q_{b^m})=\frac{h_\infty(Q_{b^m})}{q_\infty(Q_{b^m})}\leq \frac{b^{(d+t-1)/d}}{b^{m/d}}\cdot \frac{2b^{\kappa_m+1}}{b-1}\leq \frac{2b^{(t-1)/d+O(1)}}{b-1}, \]
for all $m\in I$. Thus,
by using \cite[Lemma~1.3]{DGLPSxx}
 in the quasi-uniform case,
the following corollary holds:

\begin{corollary}
    The following holds true:
    \begin{enumerate}
    \item 
    Any well-separated family $\{Q_{b^m}\}_{m \in I}$ is a quasi-uniform family. 

    \item
    Any well-separated sequence $\Scal = \{\bsx_0,\bsx_1,\dots\}$ is a quasi-uniform sequence.    
    \end{enumerate}
\end{corollary}

Since $\rho_{\infty,T}(Q_{b^m})$ has the same upper bound as shown above for a toroidally well-separated sequence, we have an equivalent statement about the quasi-uniformity with respect to the toroidal distance. 

\subsection{An algebraic criterion for \texorpdfstring{$\bsc$}{c}-separated digital nets}\label{subsec_well_separated}

For generating matrices $F_1,\ldots,F_d\in \FF_b^{m\times m}$, let us write
\begin{equation*}
F_j = \begin{pmatrix} f_{j,1} \\ f_{j,2} \\ \vdots \\ f_{j,m} \end{pmatrix}\quad \text{with $f_{j,1},\ldots,f_{j,m}\in \FF_b^m$}.
\end{equation*}
If for all $c_j \ge 0$ such that $c_1 + c_2 + \cdots + c_d = m-t$, the vectors $$f_{1,1}, \ldots, f_{1, c_1}, f_{2,1}, \ldots, f_{2, c_2}, \ldots, f_{d, 1}, \ldots, f_{d, c_d}$$ are linearly independent over $\FF_b$, then it is known that, for any $\bsdelta \in \{0, 1/b^m, 2/b^m, \ldots, (b^m-1)/b^m\}^d$, the digitally shifted digital net $Q(F_1, \ldots, F_d)\oplus \bsdelta$ is a $(t,m,d)$-net in base $b$, see, e.g., \cite[Theorem~4.52 \& Lemma 4.67]{DP10}, and we call it a (digitally shifted) digital $(t,m,d)$-net over $\FF_b$.

\bigskip

In the following, we give a condition on the generating matrices and digital shift such that the corresponding digital net is (toroidally) $\kappa$-separated.

Let $Q(F_1, \ldots, F_d)$ be a digital net over $\FF_b$, where $F_1, \dots, F_d \in \FF_b^{m\times m}$.
Let $\bsdelta = (\delta_1, \delta_2, \ldots, \delta_d) \in \{0, 1/b^m, 2/b^m, \ldots, (b^m-1)/b^m\}^d$
and $\delta_j = \delta_{j,1} b^{-1} + \cdots + \delta_{j,m} b^{-m}$ with $\delta_{j,k} \in \{0, 1, \ldots, b-1\}$. Define $\vec{\delta}_j = (\delta_{j,1}, \delta_{j,2}, \ldots, \delta_{j, c_j}, \delta_{j,c_j+1})^\top$. 
Our point set is the digitally shifted digital net $Q(F_1, \ldots, F_d) \oplus \bsdelta$.
For our purpose, for given $\bsc, \bsa, \bse$,
we find the number of points contained in
$\tilde{J}_{\bsc,\bsa,\bse}$.
Since we have
\begin{equation}\label{eq_J_union}
\tilde{J}_{\bsc,\bsa,\bse} = \bigcup_{\bsg \in \{0,1,\dots,b-1\}^d}  J_{\bsc+\bsone,(b\times\bsa-\bse+\bsg) \bmod b^{\bsc+\bsone}}
\end{equation}
where
\begin{align*}
\bsc + \bsone &:= (c_j+1)_{j=1}^d,\\
(b\times\bsa-\bse+\bsg) \bmod b^{\bsc+\bsone} &:= ((ba_j-e_j+g_j) \bmod b^{c^j+1})_{j=1}^d,
\end{align*}
and all $J_{\bsc+\bsone,(b\times\bsa-\bse+\bsg) \bmod b^{\bsc+\bsone}}$ are disjoint with each other,
it suffices to find the number of points in the elementary interval $J_{\bsc+\boldsymbol{1},(b\times\bsa-\bse+\bsg) \bmod b^{\bsc+\bsone}}$ for each $\bsg = (g_1,\dots,g_d) \in \{0,1,\dots,b-1\}^d$.
For $g_j = 0, 1, \ldots, b-1$, let
\begin{equation*}
(ba_{j} - e_j + g_j) \bmod{b^{c_j+1}} = b^{c_j} A^{(g_j)}_{j,1} + {b^{c_j-1}}A^{(g_j)}_{j,2} + \cdots + b A^{(g_j)}_{j, c_j} + A^{(g_j)}_{j, c_j+1}
\end{equation*}
where the coefficients $A_{j,k}^{(g_j)} \in \{0, 1, \ldots, b-1\}$.
Since $|e_j-g_j| \le b-1$ holds,
for fixed $\bsc,\bsa,\bse$, we have
\begin{align}
b^{c_j-1}A^{(g_j)}_{j,1} &+ b^{c_j-2}A^{(g_j)}_{j,2} + \cdots + A^{(g_j)}_{j, c_j} \notag\\
&= \begin{cases}
a_j & \text{if } g_j \ge e_j, \\
(a_j-1) \bmod b^{c_j} & \text{if } 0 \le g_j < e_j,
\end{cases} \notag\\
&= \begin{cases}
b^{c_j-1}A^{(e_j)}_{j,1} + b^{c_j-2}A^{(e_j)}_{j,2} + \cdots + A^{(e_j)}_{j, c_j} & \text{if } g_j \ge e_j, \\
b^{c_j-1}A^{(0)}_{j,1} + b^{c_j-2}A^{(0)}_{j,2} + \cdots + A^{(0)}_{j, c_j} & \text{if } 0 \le g_j < e_j.
\end{cases} \label{eq_A_change}
\end{align}
In particular, for $e_j \neq 0$ we have
\begin{equation}\label{eq_coeff_change}
A_{j,c_j}^{(e_j)} = 1 + A_{j, c_j}^{(0)} \pmod{b}.
\end{equation}

For $j = 1, 2, \ldots, d$, define the vectors 
$\vec{A}_j^{(g_j)} \in \FF_b^{c_j+1}$ by
\begin{align*}
\vec{A}_j^{(g_j)} &= \begin{pmatrix} A^{(g_j)}_{j,1} \\ \vdots \\ A^{(g_j)}_{j, c_j} \\ A^{(g_j)}_{j, c_j+1} \end{pmatrix} \\
&= \begin{cases} (A_{j,1}^{(0)}, \ldots, A_{j,c_j}^{(0)}, g_j-e_j)^\top & \mbox{ if } e_j=0, g_j \in \{0, 1, \ldots, b-1\}, \\
(A^{(0)}_{j,1}, \ldots, A^{(0)}_{j,c_j},  g_j-e_j)^\top & \mbox{ if } e_j \neq 0, g_j \in \{0, 1, \ldots, e_j-1\}, \\
(A^{(e_j)}_{j,1}, \ldots, A^{(e_j)}_{j,c_j}, g_j - e_j)^\top & \mbox{ if } e_j \neq 0, g_j \in \{e_j, e_j+1, \ldots, b-1\}.
\end{cases}
\end{align*}
and the matrices
\begin{equation*}
F_{j,c_j+1} = \begin{pmatrix} f_{j,1} \\ f_{j,2} \\ \vdots \\ f_{j, c_j + 1} \end{pmatrix},
\end{equation*}
where we formally regard 
$f_{j,m+1} = \bszero$ if $c_j = m$, for $1 \le j \le d$.
Then $\bsx_n \oplus \bsdelta \in J_{\bsc+\bsone,(b\times\bsa-\bse+\bsg) \bmod b^{\bsc+\bsone}}$
if and only if, for all $j = 1, 2, \ldots, d$, 
it holds that
\begin{equation}\label{dig_net_lin_systems}
F_{j, c_j+1} \vec{n}  = \vec{A}^{(g_j)}_j - \vec{\delta}_j \in \mathbb{F}_b^{c_{j}+1}.
\end{equation}

Let $F_{\bsc+\bsone}\in \FF_b^{(c_1+\dots+c_d+d) \times m}$ and $\vec{B}_{\bsc+\bsone, \bsdelta, \bsg} \in \FF_b^{(c_1+\dots+c_d+d) \times 1}$ be
\begin{equation}\label{eq:FB}
F_{\bsc+\bsone} = \begin{pmatrix} F_{1,c_1 +1} \\ F_{2, c_2+1} \\ \vdots \\ F_{d, c_d+1} \end{pmatrix} 
\quad \mbox{ and } \quad
\vec{B}_{\bsc+\bsone, \bsdelta, \bsg} = \begin{pmatrix} \vec{A}^{(g_1)}_1 - \vec{\delta}_1 \\  \vdots \\ \vec{A}^{(g_d)}_d - \vec{\delta}_d \end{pmatrix}.
\end{equation}
Then, from the above argument,
the number of points in $J_{\bsc+\bsone,(b\times\bsa-\bse+\bsg) \bmod b^{\bsc+\bsone}}$ is given by the number of solutions $\vec{n}$ of the linear systems
\begin{equation}\label{eq_lin_sys_counting}
F_{\bsc+\bsone} \vec{n} = \vec{B}_{\bsc+\bsone, \bsdelta, \bsg}.
\end{equation}
Using \eqref{eq_J_union} it follows that the number of points in $\tilde{J}_{\bsc,\bsa,\bse}$ is the sum of the number of solutions of \eqref{eq_lin_sys_counting}
where $\bsg$ runs through all admissible choices $\{0,1,\dots,b-1\}^d$.
It is also worth noting that the number of points in $J_{\bsc,\bsa,\bse}$ is bounded by the number of points in $\tilde{J}_{\bsc,\bsa,\bse}$.

In order to prove a bound on the minimum distance using Lemma~\ref{lem_well_sep}, we need to show that the digital net is $\bsc$-separated for some $\bsc$ as defined in Definition~\ref{def_well_sep_net}. This means that we need to show that there is at most one point in $\tilde{J}_{\bsc,\bsa,\bse}$. In terms of \eqref{eq_lin_sys_counting}, this means that the set
\begin{equation*}
\mathcal{N} = \left\{ \vec{n} \in \mathbb{F}_b^m: \exists \bsg \in \{0, 1, \ldots, b-1\}^d \mbox{ such that } F_{\bsc+\bsone} \vec{n} = \vec{B}_{\bsc+\bsone, \bsdelta, \bsg} \right\}
\end{equation*}
has at most one element.

We now further assume that the row rank of $F_{\bsc+\bsone}$ is equal to $m$.
Note that this is satisfied if $c_1+\cdots + c_m + d \ge m$ and $Q$ is a $(0,m,d)$-net.
Under this assumption, for any given $\bsg \in \{0, 1, \ldots, b-1\}^d$, the number of solutions of \eqref{eq_lin_sys_counting} is at most one. 

To show that $\mathcal{N}$ can have at most one solution, we proceed in the following way. Assume that $c_1 + \cdots + c_d + d > m$. Then the rows of the matrix $F_{\bsc+\bsone}$ are linearly dependent. If for a given $\bsg$, the augmented matrix $(F_{\bsc+\bsone} \mid \vec{B}_{\bsc+\bsone, \bsdelta, \bsg})$ does not satisfy the same linear dependence relation as $F_{\bsc+ \bsone}$, then \eqref{eq_lin_sys_counting} cannot have a solution $\vec{n}$, because the equations are not consistent. Thus, if there is at most one element $\bsg \in \{0, 1, \ldots, b-1\}^d$ such that $(F_{\bsc+\bsone} \mid \vec{B}_{\bsc+\bsone, \bsdelta, \bsg})$ satisfies the same linear dependence relations as $F_{\bsc+\bsone}$, then $\mathcal{N}$ has at most one solution, which implies that the digital net is $\bsc$-separated.
Noting that $F_{\bsc+\bsone}$ is independent of $\bsa,\bse,\bsg$,
we can precompute such independent equations. Further note that $\vec{B}_{\bsc+\bsone, \bsdelta, \bsg}$ does not depend $\mathbb{F}_b$ linearly on $g_j$ when $e_j \neq 0$.

The above argument can be summarized into the following algebraic criterion.
\begin{theorem}\label{thm:criterion}
Let $m \in \NN$, $\bsdelta \in \{0, 1/b^m, 2/b^m, \ldots, (b^m-1)/b^m\}^d$
and $Q(F_1, \ldots, F_d)$ be a digital net over $\FF_b$, where $F_1, \dots, F_d \in \FF_b^{m\times m}$.
Let $\bsc=(c_1,\dots,c_d) \in \NN_0^d$ such that $c_j \le m$ for $1 \le j \le d$ and $c_1 + \cdots + c_d + d > m$, and
assume that the row rank of $F_{\bsc+\bsone}$, given in \eqref{eq:FB}, is equal to $m$.
Here we formally regard in \eqref{eq:FB} that
$f_{j,m+1} = \bszero$ if $c_j = m$, for $1 \le j \le d$.
Then, for any $\bsa \in \prod_{j=1}^d\{0,1,\dots,b^{c_j}-1\}$ and $\bse \in \{0,1,\dots,b-1\}^d$,
the following are equivalent:
\begin{enumerate}
\item 
In the shifted toroidal elementary interval $\tilde{J}_{\bsc,\bsa,\bse}$,
there exist at most one point of the digitally shifted digital net $Q(F_1, \ldots, F_d) \oplus \bsdelta$. 
\item 
The sum of the number of solutions of \eqref{eq_lin_sys_counting},
with respect to $\bsg$ ranging over $\{0,1,\dots,b-1\}^d$,
is at most one.

\item 
There exist at most one $\bsg \in \{0, 1, \ldots, b-1\}$ such that,
for any nontrivial 
$\FF_b$-linear dependence relations of row vectors of 
$F_{\bsc+\bsone}$, 
the same relations are also satisfied for the same rows of the augmented matrix $(F_{\bsc+\bsone} \mid \vec{B}_{\bsc+\bsone, \bsdelta, \bsg}).$
\end{enumerate}
\end{theorem}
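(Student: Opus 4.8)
The plan is to establish the two equivalences $(1)\Leftrightarrow(2)$ and $(2)\Leftrightarrow(3)$ separately, relying entirely on the counting machinery developed in the paragraphs preceding the statement. Essentially all of the technical work has already been carried out in setting up the linear systems \eqref{eq_lin_sys_counting}; what remains is to package the logic cleanly and to supply the one standard linear-algebra fact connecting consistency of an overdetermined system to its row-dependence relations.

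For $(1)\Leftrightarrow(2)$, I would start from the disjoint decomposition \eqref{eq_J_union}, which writes $\tilde{J}_{\bsc,\bsa,\bse}$ as a disjoint union over $\bsg\in\{0,1,\dots,b-1\}^d$ of ordinary $b$-adic elementary intervals $J_{\bsc+\bsone,(b\times\bsa-\bse+\bsg)\bmod b^{\bsc+\bsone}}$. Since these pieces are pairwise disjoint, the number of net points in $\tilde{J}_{\bsc,\bsa,\bse}$ is the sum over $\bsg$ of the number of points in each piece. By the derivation leading to \eqref{dig_net_lin_systems} and \eqref{eq_lin_sys_counting}, the number of points of $Q(F_1,\dots,F_d)\oplus\bsdelta$ in the $\bsg$-th piece equals the number of solutions $\vec n\in\FF_b^m$ of \eqref{eq_lin_sys_counting}. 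Summing over $\bsg$ therefore recovers the total point count in $\tilde{J}_{\bsc,\bsa,\bse}$, and ``at most one point'' is verbatim the same statement as ``the total number of solutions is at most one''; this is the content of $(1)\Leftrightarrow(2)$.

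For $(2)\Leftrightarrow(3)$, the hypothesis that the row rank of $F_{\bsc+\bsone}$ equals $m$ is the key ingredient: since $F_{\bsc+\bsone}$ has $m$ columns, it has \emph{full column rank}. Hence, for each fixed $\bsg$, the system \eqref{eq_lin_sys_counting} has at most one solution, so its number of solutions is either $0$ or $1$, and it equals $1$ precisely when the system is consistent, i.e.\ when $\vec B_{\bsc+\bsone,\bsdelta,\bsg}$ lies in the column space of $F_{\bsc+\bsone}$. Because $c_1+\cdots+c_d+d>m$ while the rank is only $m$, the matrix has strictly more rows than its rank, so its left null space is nontrivial: there exist nonzero $\FF_b$-linear dependence relations among the rows. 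By the standard Rouch\'e--Capelli criterion, consistency is equivalent to requiring that $\vec B_{\bsc+\bsone,\bsdelta,\bsg}$ satisfy every such relation as well---that is, that the augmented matrix $(F_{\bsc+\bsone}\mid\vec B_{\bsc+\bsone,\bsdelta,\bsg})$ obey the same row-dependence relations as $F_{\bsc+\bsone}$. Consequently the sum in $(2)$ counts exactly the number of $\bsg$ for which the augmented matrix respects these relations, and ``sum $\le 1$'' is identical to ``at most one such $\bsg$'', which is precisely $(3)$.

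The one genuinely non-bookkeeping step is the characterization of consistency via preservation of row-dependence relations, and I expect this to be the main point to state carefully; it is nonetheless a clean consequence of the fact that the column space of $F_{\bsc+\bsone}$ is exactly the annihilator of its left null space. Everything else---the disjointness asserted in \eqref{eq_J_union}, the translation of membership in the elementary interval into the linear system \eqref{dig_net_lin_systems}, and the uniqueness coming from full column rank---is already established in the preceding discussion, so the proof amounts to chaining these observations together.
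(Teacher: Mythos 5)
Your proposal is correct and follows essentially the same route as the paper: the paper's ``proof'' is precisely the discussion preceding the theorem (the disjoint decomposition \eqref{eq_J_union}, the translation into the linear systems \eqref{eq_lin_sys_counting}, and the full-column-rank/consistency argument), and you chain these together in the same way, merely making explicit the Rouch\'e--Capelli direction that the paper leaves implicit.
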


Using this theorem, we can check if a given digitally shifted digital net is $\bsc$-separated, either toroidally or not.
That is, check if the third condition of the lemma holds true for any choice of $\bsa$ and $\bse$.

\subsection{An example of two-dimensional well-separated digital nets}

Based on the previous section, we now show that the Larcher--Pillichshammer digital nets (LP net) over $\FF_b$ from \cite{LP01} are well-separated. The LP net has $m \times m$ generating matrices
\begin{equation*}
F_1 = \begin{pmatrix} 
1 & 0 & \ldots & \ldots & 0 \\
0 & 1 & 0      & \ldots & 0 \\
\vdots & \ddots & \ddots & \ddots & \vdots \\
0 & \ldots & 0 & 1 & 0 \\
0 & \ldots & \ldots & 0 & 1
\end{pmatrix} 
\quad\mbox{ and }\quad F_2 = \begin{pmatrix}
1 & 1 & \ldots & \ldots & 1 \\
1 & 1 & \ldots & 1 & 0 \\
\vdots & \vdots & \iddots & \iddots & \vdots \\
1 & 1 & 0 & \ldots & 0 \\
1 & 0 & \ldots & 0 & 0
\end{pmatrix}.
\end{equation*}
It is well known that the LP net is a $(0,m,2)$-net in base $b$.

\begin{theorem}\label{thm_LP_quasi_uniform}
Let $m\geq 1$ be an integer. 
For any $\bsdelta \in \{0, 1/b^m, 2/b^m, \ldots, (b^m-1)/b^m\}^2$, the digitally shifted Larcher--Pillichshammer net $Q(F_1, F_2) \oplus \bsdelta$ is $\lceil m/2 \rceil +1$-separated and also toroidally $\lceil m/2 \rceil + 1$-separated.
\end{theorem}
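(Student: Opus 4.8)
My plan is to apply the algebraic criterion of Theorem~\ref{thm:criterion}, reading off the relevant linear dependence relations directly from the rigid structure of $F_1$ and $F_2$. Since $J_{\bsc,\bsa,\bse}\subseteq\tilde J_{\bsc,\bsa,\bse}$ always holds, any bound of one on the number of points in every admissible toroidal interval $\tilde J_{\bsc,\bsa,\bse}$ forces the same bound for every admissible $J_{\bsc,\bsa,\bse}$; hence it suffices to prove the toroidal statement, and ordinary $\lceil m/2\rceil+1$-separation follows with the same $\bsc$. Accordingly I would fix $c_1=\lceil m/2\rceil+1$ and $c_2=\lfloor m/2\rfloor$, so that $\max(c_1,c_2)=\lceil m/2\rceil+1$ and $c_1+c_2+2=m+3>m$, and then verify condition~(3) of Theorem~\ref{thm:criterion} for every $\bsa$ and $\bse$.

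First I would check the rank hypothesis. Here $f_{1,i}$ is the $i$-th standard unit vector and $f_{2,i}$ is the sum of the first $m+1-i$ unit vectors, so the consecutive differences $f_{2,i}-f_{2,i+1}$ recover the $(m+1-i)$-th unit vector. Thus the rows of $F_{\bsc+\bsone}$ span the unit directions $1,\dots,c_1+1$ (from $F_1$) together with $m-c_2+1,\dots,m$ (from $F_2$); since $c_1+1\ge m-c_2$, these exhaust $\FF_b^m$ and $F_{\bsc+\bsone}$ has row rank $m$. Because $F_{\bsc+\bsone}$ has $m+3$ rows, its left null space is three-dimensional, and I would write down an explicit basis of three dependence relations: two coming from the unit directions produced by both $F_1$ and $F_2$ (namely $f_{1,c_1}$ and $f_{1,c_1+1}$ expressed through consecutive differences of the $f_{2,i}$), and one coming from $f_{2,c_2+1}=f_{1,1}+\cdots+f_{1,m-c_2}$.

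Next I would substitute the shape of $\vec B_{\bsc+\bsone,\bsdelta,\bsg}$ from \eqref{eq:FB} into these three relations. The essential structural facts are that $\vec B$ depends on $g_j$ only through the last entry of its $j$-th block, which equals $g_j-e_j-\delta_{j,c_j+1}$ and is $\FF_b$-linear in $g_j$; that every other entry of the $j$-th block is constant on each of the two ranges $g_j\ge e_j$ and $g_j<e_j$; and that the carry relation \eqref{eq_coeff_change} makes the $c_j$-th entry jump by exactly one unit of $\FF_b$ as $g_j$ crosses the threshold $e_j$ (when $e_j\neq 0$). Feeding these in, the three relations become three scalar equations over $\FF_b$: one determining $g_2$ from the sign pattern $[g_1\ge e_1]$, one determining $g_1$ from the sign pattern $[g_2\ge e_2]$, and one that is a pure consistency condition linking the two sign patterns.

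The main obstacle is this last part: showing that these equations, together with the requirement that the forced values of $g_1,g_2$ really lie in their declared ranges, have at most one solution $\bsg\in\{0,1,\dots,b-1\}^2$. I would argue by a short case distinction on whether $e_1$ and $e_2$ vanish; the cases $e_j=0$ collapse the corresponding sign pattern and are straightforward, while in the generic case $e_1,e_2\neq 0$ the carry relation \eqref{eq_coeff_change} is exactly what forbids both sign-pattern branches from being simultaneously consistent, leaving at most one admissible $\bsg$. This unit-sized jump is also why a resolution with only two excess rows, i.e.\ $c_1+c_2=m$, fails to pin down $\bsg$ and the resolution must be raised by one, which is the source of the $+1$ in $\lceil m/2\rceil+1$. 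Once condition~(3) holds for all $\bsa,\bse$, Theorem~\ref{thm:criterion} yields toroidal---and hence ordinary---$\lceil m/2\rceil+1$-separation of $Q(F_1,F_2)\oplus\bsdelta$, and Lemma~\ref{lem_well_sep} then gives the separation-radius bound $q_\infty\ge\frac{b-1}{2b}b^{-(\lceil m/2\rceil+1)}$.
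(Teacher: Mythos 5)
Your overall strategy is the same as the paper's: fix $c_1=\lceil m/2\rceil+1$, verify the rank hypothesis of Theorem~\ref{thm:criterion}, pick three spanning dependence relations, and do a case analysis on whether $e_1,e_2$ vanish. The gap is in your choice $c_2=\lfloor m/2\rfloor$ (the paper takes $c_2=\lfloor m/2\rfloor+1$) and in the unproved assertion that handles the crucial case $e_1,e_2\neq 0$. With your $\bsc$ the left null space of $F_{\bsc+\bsone}$ is spanned by $f_{1,c_1}+f_{2,c_2+1}=f_{2,c_2}$, $f_{1,c_1+1}+f_{2,c_2}=f_{2,c_2-1}$, and $f_{2,c_2+1}=f_{1,1}+\cdots+f_{1,c_1-1}$; crucially, $f_{2,c_2+1}$ is the \emph{last} row of the second block, so its $\vec B$-entry is the $g_2$-linear one. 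Writing $\epsilon_j=\bsone_{g_j\ge e_j}$, your three scalar equations reduce (using \eqref{eq_coeff_change}) to $g_2=\epsilon_2-\epsilon_1+C$, $g_1=G_1(\epsilon_2)$, and $g_2=G_2(\epsilon_1)$, where $G_2(1)-G_2(0)$ equals the number of digits of $a_1$ in positions $1,\dots,c_1-1$ affected by the carry from $a_1-1$ to $a_1$, taken mod $b$. For $b\ge 3$, if $a_1$ has $b-2$ trailing zeros then $G_2(0)-G_2(1)=2$, and the two sign patterns $(\epsilon_1,\epsilon_2)=(1,0)$ and $(0,1)$ both satisfy the consistency condition; one can then choose $e_1,e_2,\bsdelta$ so that the forced values $g_1=G_1(\epsilon_2)$, $g_2=G_2(\epsilon_1)$ land in the declared branches for both patterns. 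So with your $\bsc$ condition~(3) of Theorem~\ref{thm:criterion} genuinely admits two $\bsg$'s, and since $F_{\bsc+\bsone}$ has full column rank this corresponds to two actual points in the interval: the claim that ``the carry relation forbids both branches from being simultaneously consistent'' is not just unproved but false for your resolution.

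The structural point you are missing is that the paper's choice $c_2=\lfloor m/2\rfloor+1$ creates the relation $f_{1,c_1}+f_{2,c_2}=f_{2,c_2-1}$, which involves \emph{only} branch-dependent entries and pairs the two unit-jump digits $A_{1,c_1}^{(g_1)}$ and $A_{2,c_2}^{(g_2)}$; the $\epsilon_2$-contributions then cancel and $\epsilon_1$ is pinned down outright, after which the other two relations fix $g_2$ and then $g_1$. In your null space the only relation free of $g$-linear entries is $f_{2,c_2}=f_{1,1}+\cdots+f_{1,c_1}$, which yields $\epsilon_2=(\text{carry count})\cdot\epsilon_1+C'$ and does not achieve this cancellation. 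To repair the proof you should raise $c_2$ to $\lfloor m/2\rfloor+1$ (which does not change $\kappa=\max(c_1,c_2)$) and run the paper's two-subcase analysis on whether $A_{2,c_2-1}^{(g_2)}$ is constant. Separately, your argument needs $c_2-2\ge 1$ and $c_1+1\le m$ for the chosen relations to exist, so the cases $m\le 3$ must be treated directly, as the paper does via the fact that $\bsc=(m,m)$ trivially separates points of $b^{-m}\ZZ^2\cap[0,1)^2$.
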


\begin{proof}
We first consider the case $1 \le m \le 3$, which implies that $m \le \lceil m/2 \rceil+1$.
Since any toroidally shifted elementary interval with $\bsc=(m,m)$ contains exactly one point in the set $\{0, 1/b^m, 2/b^m, \ldots, (b^m-1)/b^m\}^2$,
$Q(F_1, F_2) \oplus \bsdelta$ is toroidally $m$-separated and thus $\lceil m/2 \rceil+1$-separated.

We now assume $m \ge 4$.
Let $c_1 = \lceil m/2 \rceil+1$, $c_2 = \lfloor m/2 \rfloor + 1$.
We check the third condition in Theorem~\ref{thm:criterion} for each admissible choice of $\bsa, \bse$.
Let $f_{j,k} \in \FF_b^m$ denote the $k$-th row of $F_j$.
Since $m$ vectors consisting of the rows $f_{1,1}, \ldots, f_{1, c_1}$ together with $f_{2,1}, \ldots, f_{2, c_2-1}$ are linearly independent over $\FF_b$, the assumption of Theorem~\ref{thm:criterion} is satisfied.
We choose $3$ equations as
\begin{align*}
& f_{1,c_1}+f_{2,c_2} =  f_{2,c_2-1}, \\
& f_{1,c_1+1} = - f_{2,c_2-1} + f_{2,c_2-2}, \\
& f_{2,c_2+1} = f_{1,1} + \cdots + f_{1,c_1-2}. 
\end{align*}
Note that the assumption $m \ge 4$ assures that these equations are valid.
To check the third condition in Theorem~\ref{thm:criterion}, it suffices to show that at most one pair $(g_1, g_2)$ satisfies the following $3$ equations (which correspond to the equations for $f_{j,k}$ above):
\begin{align*}
    & A^{(g_1)}_{1,c_1}-\delta_{1,c_1} +  A^{(g_2)}_{2,c_2}-\delta_{2,c_2} =  A^{(g_2)}_{2,c_2-1}-\delta_{2,c_2-1} , \\
    & A^{(g_1)}_{1,c_1+1}-\delta_{1,c_1+1} = - (A^{(g_2)}_{2,c_2-1}-\delta_{2,c_2-1}) + (A^{(g_2)}_{2,c_2-2}-\delta_{2,c_2-2}),  \\
    & A^{(g_2)}_{2, c_2+1}-\delta_{2,c_2+1} = (A^{(g_1)}_{1,1}-\delta_{1,1}) + \cdots + (A^{(g_1)}_{1,c_1-2}-\delta_{1,c_1-2}). 
\end{align*}
In what follows, we consider four cases based on whether $e_1$ and $e_2$ are equal to $0$.

\begin{itemize}
    \item If $e_1 = e_2 = 0$, then there is at most one point in the corresponding interval due to the fact that the LP net is a $(0,m,2)$-net and so is the digitally shifted LP net. 

    \item If $e_1\neq 0$ and $e_2=0$, then 
    \begin{align*}
    & A^{(g_1)}_{1,c_1}-\delta_{1,c_1} + A^{(0)}_{2,c_2}-\delta_{2,c_2} =  A^{(0)}_{2,c_2-1}-\delta_{2,c_2-1}, \\
    & g_1-e_1-\delta_{1,c_1+1} = - (A^{(0)}_{2,c_2-1}-\delta_{2,c_2-1}) + (A^{(0)}_{2,c_2-2}-\delta_{2,c_2-2}),   \\
    & g_2-\delta_{2,c_2+1} = (A^{(g_1)}_{1,1}-\delta_{1,1}) + \cdots + (A^{(g_1)}_{1,c_1-2}-\delta_{1,c_1-2}).
    \end{align*}
    The second equation implies that there is only one choice for $g_1$. Then the third equation implies that there is only one choice for $g_2$.

    \item Let now $e_1 = 0$ and $e_2 \neq 0$. Then
    \begin{align*}
    & A^{(0)}_{1,c_1}-\delta_{1,c_1} + A^{(g_2)}_{2,c_2}-\delta_{2,c_2} =   A^{(g_2)}_{2,c_2-1}-\delta_{2,c_2-1}, \\
    & g_1-\delta_{1,c_1+1} = -(A^{(g_2)}_{2,c_2-1}-\delta_{2,c_2-1}) + (A^{(g_2)}_{2,c_2-2}-\delta_{2,c_2-2}),  \\
    & g_2-e_2-\delta_{2,c_2+1} = (A^{(0)}_{1,1}-\delta_{1,1}) + \cdots + (A^{(0)}_{1,c_1-2}-\delta_{1,c_1-2}). 
    \end{align*}
    The last equation implies that there is only one choice for $g_2$ and then the second equation implies that there is only one choice for $g_1$ as well.

    \item Finally let $e_1\neq 0$ and $e_2\neq 0$. Then 
    \begin{align}
    & A^{(g_1)}_{1,c_1}-\delta_{1,c_1} + A^{(g_2)}_{2,c_2}-\delta_{2,c_2} =   A^{(g_2)}_{2,c_2-1}-\delta_{2,c_2-1}, \label{lin_sys_eq1} \\
    & g_1-e_1-\delta_{1,c_1+1} = -(A^{(g_2)}_{2,c_2-1}-\delta_{2,c_2-1}) + (A^{(g_2)}_{2,c_2-2}-\delta_{2,c_2-2}),  \label{lin_sys_eq2} \\
    & g_2-e_2-\delta_{2,c_2+1} = (A^{(g_1)}_{1,1}-\delta_{1,1}) + \cdots + (A^{(g_1)}_{1,c_1-2}-\delta_{1,c_1-2}). \label{lin_sys_eq3}
    \end{align}

    Then \eqref{eq_coeff_change} implies that 
    \begin{equation}\label{LP_Eq1}
    A_{j,c_j}^{(g_j)} = A_{j, c_j}^{(0)} + \begin{cases} 0 & \mbox{if } 0 \le g_j < e_j, \\ 1 & \mbox{if } e_j \le g_j < b. \end{cases}
    \end{equation}

    We consider two cases. First assume that $A_{2, c_2-1}^{(0)} = A_{2, c_2-1}^{(e_2)}$, i.e. $A_{2, c_2-1}^{(g_2)}$ is independent of $g_2$. Then \eqref{eq_A_change} implies that $(a_2-1)+1$ has no carry over, i.e., $(a_2-1) \bmod{b} < b-1$, and thus $A_{2,k}^{(g_2)}$ is constant, independent of $g_2$, for all $1 \le k < c_2$. Then the second equation \eqref{lin_sys_eq2} implies that $g_1$ is constant, and in turn, the third equation \eqref{lin_sys_eq3} implies that $g_2$ is constant as well. In this case there is only one pair $(g_1, g_2)$ which satisfies all equations.

    Now assume that $A_{2, c_2-1}^{(g_2)}$ is not constant. Then 
    \begin{equation*}
    A_{2, c_2-1}^{(g_2)} = A_{2, c_2-1}^{(0)} + \begin{cases} 0 & \mbox{if } 0 \le g_2 < e_2, \\ 1 & \mbox{if }  e_2 \le g_2 < b. \end{cases}
    \end{equation*}
    Then \eqref{LP_Eq1} for $j=2$ and the first equation \eqref{lin_sys_eq1} implies that for all $g_2 \in \{0, 1, \ldots, b-1\}$, we get the same value for $A_{1,c_1}^{(g_1)}$. With \eqref{eq_coeff_change} and \eqref{eq_A_change}, it follows that $A_{1,c_1}^{(g_1)}$, whether $g_1<e_1$ or $g_1\ge e_1$, is determined independently of $g_2$. Then \eqref{eq_A_change} implies that $A_{1,k}^{(g_1)}$ 
    is also determined independently of $g_2$
    for all $1 \le k \le c_1$. Then the third equation \eqref{lin_sys_eq3} implies that $g_2$ is fixed. Then the second equation \eqref{lin_sys_eq2} implies that also $g_1$ is fixed and hence there is at most one possible choice for the pair $(g_1, g_2)$.
    \end{itemize}

    Therefore, for any admissible $\bsa,\bse$, there exists at most one point of the LP net in $\tilde{J}_{\bsc,\bsa,\bse}$, concluding that the LP net is $\kappa$-separated (either toroidally or not) with
    \[ \kappa=\max(c_1,c_2)=\lceil m/2 \rceil+1. \qedhere \]
\end{proof}

The proof of Theorem~\ref{thm_LP_quasi_uniform} can be generalized to other digital $(0,m,2)$-nets. We can change the generating matrices $F_1, F_2$ to $F'_1, F'_2$, provided that $F'_1, F'_2$ generate a digital $(0,m,2)$-net, and that the first $c_1+1$ rows of $F'_1$ are the same as the first $c_1+1$ rows of $F_1$ and analogously for $F'_2$. Then $F'_1, F'_2$ still generate $\lceil m/2 \rceil+1$-separated, digital $(0,m,2)$-net.

\section{Not well-separated digital nets}\label{sec:not-well-separated}

In this section, we provide some examples of digital $(0,m,d)$-nets that do not form a quasi-uniform family nor a well-separated family.
The proof strategy is to find two close points, specifically $\bsx_n$ and $\bsx_1$, or $\bsx_n$ and $\bsx_k$, which would imply a small separation radius.

\subsection{The Hammersley \texorpdfstring{$(0,m,2)$}{(0,m,2)}-net}

The generating matrices for the Hammersley net $\mathcal{H}_{m, b}$ over $\FF_b$ are given by
\begin{equation*}
F_1 = \begin{pmatrix} 
1 & 0 & \ldots & \ldots & 0 \\
0 & 1 & 0      & \ldots & 0 \\
\vdots & \ddots & \ddots & \ddots & \vdots \\
0 & \ldots & 0 & 1 & 0 \\
0 & \ldots & \ldots & 0 & 1
\end{pmatrix} \quad \mbox{ and }\quad  F_2 = \begin{pmatrix}
0 & \ldots & \ldots & 0 & 1 \\
0 & \ldots & 0 & 1 & 0 \\
\vdots & \iddots & \iddots & \iddots & \vdots \\
0 & 1 & 0 & \ldots & 0 \\
1 & 0 & \ldots & 0 & 0
\end{pmatrix}.
\end{equation*}

\begin{theorem}
The Hammersley $(0,m,2)$-nets do not form a quasi-uniform family nor a well-separated family.
Indeed we have
\begin{equation*}
q_\infty(\mathcal{H}_{m,b}) \le \frac{b-1}{2 \cdot b^m}.
\end{equation*}
\end{theorem}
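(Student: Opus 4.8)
The goal is to show the Hammersley net has a pair of very close points, yielding the stated upper bound on the separation radius. The plan is to exhibit two explicit point indices $n$ and $n'$ whose images under the digital net construction land in the same (or adjacent) fine elementary cell in both coordinates, forcing them to be within distance $(b-1)/b^m$ in the $\ell_\infty$ norm. Since $q_\infty(P) = \min_{\bsx \neq \bsy}\|\bsx-\bsy\|_\infty / 2$, producing one such pair immediately gives the bound.

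First I would recall the structure of the two generating matrices: $F_1$ is the identity, so for index $n$ with digit vector $\vec{n}=(n_0,\dots,n_{m-1})^\top$ the first coordinate is simply $x_{n,1} = n_0/b + n_1/b^2 + \cdots + n_{m-1}/b^m$, i.e. the digits of $x_{n,1}$ read off $\vec n$ directly. The matrix $F_2$ is the anti-diagonal (reversal) matrix, so the second coordinate reverses the digit order: $x_{n,2} = n_{m-1}/b + n_{m-2}/b^2 + \cdots + n_0/b^m$. This is exactly the classical radical-inverse/Hammersley pairing. The key observation is that I want two indices whose digit strings agree in their top digit and whose reversals also agree in their top digit, while differing only in the least significant place of each coordinate.

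Next I would choose the pair explicitly. Take $n$ corresponding to $\vec n = (b-1,0,\dots,0,0)^\top$ and $n'$ corresponding to $\vec n' = (b-1,0,\dots,0,b-1)^\top$ (adjusting to keep the leading digits fixed). Then $x_{n,1}$ and $x_{n',1}$ share the same leading digit $b-1$, differing only by a term of size at most $(b-1)/b^m$ in the last digit; simultaneously, after reversal the second coordinates $x_{n,2},x_{n',2}$ share their leading digit and again differ by at most $(b-1)/b^m$. More carefully, one wants the two indices to differ only in a single digit position so that in one coordinate they differ in the least significant ($b^{-m}$) place and in the other coordinate that same digit sits in the most significant place but is chosen so the two points still coincide there or differ negligibly. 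The cleanest realization is $n=0$ (giving $\bsx_0=\bszero$) paired with an index whose digits place a single nonzero entry so that one coordinate is $0$ and the other is $(b-1)/b^m$ — I would verify that both coordinates of the difference have $\ell_\infty$ magnitude at most $(b-1)/b^m$.

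The main obstacle is bookkeeping the digit reversal correctly so that the chosen pair genuinely achieves $\|\bsx_n - \bsx_{n'}\|_\infty \le (b-1)/b^m$ in \emph{both} coordinates simultaneously, rather than being close in one coordinate and far in the other. Once the pair is pinned down, the computation is routine: each coordinate difference is a single digit weighted by some power of $b^{-1}$, and by arranging the differing digit to contribute at the $b^{-m}$ level in one coordinate while contributing $0$ (through equal leading digits) in the other, I obtain $\|\bsx_n-\bsx_{n'}\|_\infty \le (b-1)/b^m$, hence $q_\infty(\mathcal{H}_{m,b}) \le (b-1)/(2b^m)$. Since this bound decays like $b^{-m}$ rather than the optimal $b^{-m/2}$ required for a quasi-uniform sequence of $(0,m,2)$-nets, the Hammersley net fails to be well-separated, as claimed.
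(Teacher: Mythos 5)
Your overall strategy---exhibiting one explicit pair of points at $\ell_\infty$ distance of order $b^{-m}$---is the same as the paper's, and your reading of the generating matrices is correct ($x_{n,1}$ is the radical inverse of $n$ and $x_{n,2}$ carries the reversed digit string). But the pairs you propose cannot work, and this is a genuine gap, not bookkeeping. If the digit vectors $\vec{n}$ and $\vec{n}'$ differ in exactly one position $i\in\{0,\dots,m-1\}$, say by $\Delta\neq 0$, then $|x_{n,1}-x_{n',1}|=|\Delta|\,b^{-(i+1)}$ while $|x_{n,2}-x_{n',2}|=|\Delta|\,b^{-(m-i)}$: the differing digit appears with full weight in \emph{both} coordinates, just at reciprocal positions, so
\[
\|\bsx_n-\bsx_{n'}\|_\infty\;\ge\;b^{-\min(i+1,\,m-i)}\;\ge\;b^{-\lceil (m+1)/2\rceil},
\]
which is of the \emph{optimal} order $b^{-m/2}$, not $b^{-m}$. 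In particular your ``cleanest realization'' ($n=0$ against an index with a single nonzero digit) fails: if that digit sits at position $m-1$, the first coordinate differs by $O(b^{-m})$ but the second differs by a constant times $b^{-1}$. The hoped-for situation in which the changed digit ``contributes $0$ through equal leading digits'' cannot occur when only one digit changes, since a digit cannot differ in one coordinate and agree in the other.

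The missing idea is to exploit carries: you need two indices that are close \emph{as integers} (so that the coordinate reading the digits in one order changes by $O(b^{-m})$) and whose digit reversals are also close as integers, even though the two digit strings differ in many positions. The paper takes the palindromic pair $n=b^{m-1}+1$, with digit vector $(1,0,\dots,0,1)^\top$, and $k=b^{m-1}-b$, with digit vector $(0,b-1,\dots,b-1,0)^\top$; palindromicity places both points on the diagonal, so the two coordinates of $\bsx_n-\bsx_k$ coincide and equal $(n-k)b^{-m}=(b+1)b^{-m}$. (As an aside, this pair actually yields $(b+1)/b^m$ rather than the $(b-1)/b^m$ stated in the paper---for $b=2$, $m=3$ one checks directly that no pair of points achieves distance $(b-1)/b^m$---but the order $b^{-m}$ is all that is needed to conclude that $\mathcal{H}_{m,b}$ is not well-separated.) Without a carry-based construction of this kind, your argument cannot get below the $b^{-m/2}$ barrier, and the proof does not go through.
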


\begin{proof}
Let $n = b^{m-1} + 1$ and $k = b^{m-1} - b$.
From their $b$-adic expansions, we have
\[ \vec{n}=(1,0,\ldots,0,1)^{\top}\quad\text{and}\quad \vec{k}=(0,b-1,\ldots, b-1, 0)^{\top}, \]
where both vectors have length $m$ as in Definition~\ref{def:digitalnet}.
Then
\begin{equation*}
\bsx_n = \left(\frac{1}{b} + \frac{1}{b^m}, \frac{1}{b} + \frac{1}{b^m} \right)
\end{equation*}
and
\begin{equation*} 
\bsx_k = \left(\frac{b-1}{b^2} + \cdots + \frac{b-1}{b^{m-1}}, \frac{b-1}{b^2} + \cdots + \frac{b-1}{b^{m-1}} \right) = \left(\frac{1}{b} - \frac{1}{b^{m-1}}, \frac{1}{b} - \frac{1}{b^{m-1}} \right).
\end{equation*}
Thus
\begin{equation*}
\bsx_n - \bsx_k = \left(\frac{b-1}{b^{m}}, \frac{b-1}{b^m} \right),
\end{equation*}
which shows that 
\begin{equation*}
q_\infty(\mathcal{H}_{m,b}) \le \frac{b-1}{2 \cdot b^{m}}.
\end{equation*}
Thus the set of Hammersley nets $\{\mathcal{H}_{m,b}\}_{m \in \NN}$ is not quasi-uniform nor well-separated.
\end{proof}

The above proof finds a small, shifted elementary interval which contains the two points $\bsx_n,\bsx_k$. That is, by setting $\bsc = \left( m-1, m-1 \right)$, $\bsa = \left( b^{c_1-1}, b^{c_2-1} \right)$ and $\bse = (1,1)$, then we see that $\bsx_n, \bsx_k \in J_{\bsc, \bsa, \bse}$. This means that the Hammersley net is not $(m-1)$-separated. Thus, Lemma~\ref{lem:nonwell_is_smallsep} implies that the separation radius of the net is relatively small as compared to the covering radius.

\subsection{Some \texorpdfstring{$(0,2)$}{(0,2)}-sequences in base 2}
In \cite{HS19}, it was shown that any digital $(0,2)$-sequence in base $2$ is given by generating matrices of the form
\[ C_1=L_1 U \quad \text{and}\quad C_2=L_2PU,\]
where $L_1$ and $L_2$ are non-singular lower-triangular infinite matrices, $U$ is a non-singular upper-triangular infinite matrix, and $P$ denotes the upper triangular Pascal matrix, all over $\FF_2$, i.e.,
\begin{align}\label{eq:pascal}
P_{i,j} = \binom{j-1}{i-1},
\end{align}
where this value is equal to $0$ if $j < i$.

When we only consider the first $2^m$ points, multiplying by $U$ from the right only affects the order of points and does not change the covering radius or separation radius of the point set. 
Therefore, we focus on the case $U=I$ with $I$ being the infinite identity matrix. 
Note that $L_1$ and $L_2$ correspond to linear scrambling. 

In the following theorem, we prove that a digital $(0,2)$-sequence in base $2$ is not well-separated when scrambling is applied individually by either $L_1$ or $L_2$ but not both. 
When $L_1=L_2=I$, the resulting sequence coincides with the two-dimensional Sobol' sequence.
This means that the result below generalizes the one shown in \cite{G24a}.
It remains open, however, whether there exists a pair 
$(L_1,L_2)$ that makes the sequence well-separated (or $\{Q_b^m\}_{m \in I}$ a well-separated family on certain index set $I$).

\begin{theorem}
Let $L$ be any non-singular lower-triangular infinite matrix over $\FF_2$. Let $w$ be any non-negative integer and $m=2^{w}$. If $(C_1,C_2)=(L,P)$ or $(C_1,C_2)=(I,LP)$, then 
    \[ q_\infty(Q_{2^m})\leq \frac{1}{2^{m}} .\]
This bound implies that the corresponding digital $(0,2)$-sequence in base $2$ is not well-separated.
\end{theorem}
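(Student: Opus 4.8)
The plan is to reduce the statement, in each of the two cases, to exhibiting a single pair of distinct indices $n,k\in\{0,1,\dots,2^m-1\}$ whose points satisfy $\|\bsx_n-\bsx_k\|_\infty\le 2^{-(m-1)}$; by the definition of the separation radius this already yields $q_\infty(Q_{2^m})\le 2^{-m}$. Two structural facts about the Pascal matrix drive everything. First, because $m=2^w$, the $m$-th column of $P$ equals $(1,\dots,1,0,0,\dots)^\top$ with exactly $m$ leading ones: indeed $\binom{m-1}{i-1}\equiv 1\pmod 2$ for all $1\le i\le m$ since $m-1=2^w-1$ has all-ones binary expansion (Lucas/Kummer), equivalently the leading $m\times m$ block of $P$ is $\bigl(\begin{smallmatrix}1&1\\0&1\end{smallmatrix}\bigr)^{\otimes w}$. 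Second, $P$ is an involution over $\FF_2$, i.e. $P^2=I$, since the integer inverse $P^{-1}_{i,j}=(-1)^{i+j}\binom{j-1}{i-1}$ reduces mod $2$ to $P_{i,j}$. I will also use that columns $1,\dots,m$ of $P$ are supported in rows $1,\dots,m$, so all vectors below stay finitely supported.

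For $(C_1,C_2)=(L,P)$ I would take $\vec n=\vec e_1$ (so $n=1$) and $\vec k=\vec e_1\oplus\vec e_m$ (so $k=2^{m-1}+1$ when $m\ge 2$; the case $m=1$ is trivial). In the second coordinate $P\vec n=\vec e_1$ gives $x_{n,2}=1/2$, while flipping the top bit adds the all-ones $m$-th column of $P$, complementing the first $m$ binary digits; hence the digit string of $x_{k,2}$ is $(0,1,\dots,1,0,\dots)$, so $x_{k,2}=1/2-2^{-m}$ and $|x_{n,2}-x_{k,2}|=2^{-m}$. In the first coordinate $L\vec e_m$ is the $m$-th column of $L$, which by lower-triangularity is supported in rows $\ge m$; thus $L\vec n$ and $L\vec k$ agree in the first $m-1$ digits and $|x_{n,1}-x_{k,1}|\le\sum_{i\ge m}2^{-i}=2^{-(m-1)}$. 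Combining the two coordinates gives $\|\bsx_n-\bsx_k\|_\infty\le 2^{-(m-1)}$, as required.

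For $(C_1,C_2)=(I,LP)$ the plan is to reduce to the previous case rather than search for a pair directly. Reindex the first $2^m$ points via $\vec n=P\vec n'$; since $P$ is invertible on the first $m$ coordinates this is a bijection of $\{0,\dots,2^m-1\}$, and using $P^2=I$ the point carried by index $\vec n'$ becomes $(I\cdot P\vec n',\,LP\cdot P\vec n')=(P\vec n',\,L\vec n')$. Therefore the point set generated by $(I,LP)$ coincides, as a set, with the one generated by $(P,L)$, which is merely the coordinate-swap of the $(L,P)$ net from the previous paragraph. Since $\|\cdot\|_\infty$ is invariant under permuting coordinates, the separation radius is unchanged and $q_\infty(Q_{2^m})\le 2^{-m}$ follows.

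The step I expect to be most delicate, and where the hypothesis $m=2^w$ is genuinely used, is the second coordinate of the case $(L,P)$: one needs the $m$-th column of $P$ to be exactly all-ones so that a single bit flip complements precisely the first $m$ digits around the centered value $1/2$. For general $m$ this column is the sparse Sierpi\'nski pattern $\bigl(\binom{m-1}{i-1}\bmod 2\bigr)_i$ and the clean cancellation breaks down. The other point requiring care is the bookkeeping at the infinite-matrix level: one must check that the reindexing and the identity $P^2=I$ are applied only to finitely supported vectors living in the first $m$ rows, and that the tail of $L$ below row $m$ in the first-coordinate estimate contributes at most $\sum_{i\ge m}2^{-i}=2^{-(m-1)}$, so nothing outside the stated bound is discarded.
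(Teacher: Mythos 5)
Your proof is correct. For the case $(C_1,C_2)=(L,P)$ it coincides with the paper's argument: the same pair $n=1$, $k=2^{m-1}+1$, the same Lucas-theorem fact that the $m$-th column of $P$ is all ones in its first $m=2^w$ rows, and the same split into a second-coordinate difference of exactly $2^{-m}$ and a first-coordinate difference of at most $2^{-(m-1)}$ coming from the lower-triangularity of $L$. For $(C_1,C_2)=(I,LP)$ you take a genuinely different route. The paper again exhibits an explicit close pair, choosing $k=2^m-2$ and showing via the hockey-stick identity and Lucas that $\sum_{j=1}^{m}P_{i,j}\equiv\binom{2^w}{i}\equiv 0\pmod 2$ for $i<m$, which forces $P\vec{k}=\vec{e}_1+\vec{e}_m$ and reduces the computation to the same shape as the first case. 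You instead use the involution $P^2=I$ over $\FF_2$ to reindex $\vec{n}=P\vec{n}'$, note that $P$ permutes the digit vectors supported on the first $m$ coordinates, and conclude that the first $2^m$ points of the $(I,LP)$ net are exactly the coordinate swap of the $(L,P)$ net, so the $\ell_\infty$ separation radius is unchanged. Both arguments are valid; yours is shorter and explains structurally why the two cases give the same bound, at the price of the (true and easily verified) facts $P^{-1}\equiv P\pmod 2$ and the bijectivity of $P$ on $\{0,\dots,2^m-1\}$, whereas the paper's version stays entirely within the binomial arithmetic it has already set up. The only bookkeeping point worth keeping is the degenerate case $m=1$, where $k=2^{m-1}+1$ is out of range; you correctly dispose of it as trivial.
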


\begin{proof}
The proof follows similar arguments to those in \cite{G24a}. Let $L = (\ell_{i,j})_{i,j \ge 1}$, where $\ell_{i,i} = 1$ and $\ell_{i,j} = 0$ for all $j > i$. Consider the case $(C_1, C_2) = (L, P)$. Then 
\begin{equation*}
\bsx_1 = \left(\frac{\ell_{1,1}}{2} + \frac{\ell_{2,1}}{2^2} + \cdots, \frac{1}{2} \right).
\end{equation*}

For any integer $i$ with $1\leq i\leq m=2^w$, whose dyadic expansion has digits given by 
$\vec{i} = (i_0,i_1,\dots,i_{w-1}, 0, 0, \ldots)^{\top}$ as in Definition~\ref{def:digitalseq},
Lucas's theorem implies
\[ P_{i,m}\equiv \binom{2^w-1}{i-1}\equiv \prod_{j=0}^{w-1}\binom{1}{i_j}\equiv 1 \pmod 2. \]
Similarly, excluding the case $i=m$, we have
\[ \sum_{j=1}^{m}P_{i,j}\equiv \binom{2^w}{i} \equiv \binom{1}{0}\times \prod_{j=0}^{w-1}\binom{0}{i_j}\equiv 0 \pmod 2.\]
This equivalence implies that, among $m$ entries $(P_{i,j})_{1\leq j\leq m}$, 
the number of entries with $P_{i,j}=1$ is even for any $i<m=2^w$.

Let $k=2^{m-1}+1$. Since $\vec{k} = (1,0,\ldots,0,1, 0, 0, \ldots)^\top$ we have
\begin{align*}
    \vec{x}_{k,1} & = \left(\ell_{1,1}, \ell_{2,1},\ldots, \ell_{m-1,1}, \ell_{m,1} \oplus \ell_{m,m}, \ell_{m+1,1} \oplus \ell_{m+1,m}, \ldots \right)^{\top} \quad \text{and} \\
    \vec{x}_{k,2} & = P \times\left(1,0,\ldots,0,1, 0, 0, \ldots \right)^{\top} \\
    & = (P_{1,1}+P_{1,m}, P_{2,1}+P_{2,m}, \ldots, P_{m,1}+P_{m,m}, \ldots)^{\top} \\
    & = (0,1,\ldots,1, 0, 0, \ldots )^\top.
\end{align*}
Thus 
\[ \bsx_k=\left(\frac{\ell_{1,1}}{2}+\cdots + \frac{\ell_{m-1,1} }{2^{m-1}} + \frac{\ell_{m,1} \oplus \ell_{m,m}}{2^m} + \frac{\ell_{m+1,1} \oplus \ell_{m+1,m}}{2^{m+1}} + \cdots, \frac{1}{2}-\frac{1}{2^m}\right),\]
which shows that
\[ q_\infty(Q_{2^m})\leq \frac{1}{2}\left\| \bsx_1-\bsx_k\right\|_{\infty} \le \frac{1}{2^{m}}.\]

Let us move on to the case $(C_1,C_2)=(I,LP)$. Since $P$ is a non-singular upper triangular matrix, we have
\begin{equation*}
\bsx_1 = \left( \frac{1}{2}, \frac{\ell_{1,1}}{2} + \frac{\ell_{2,1}}{2^2} + \cdots  \right).
\end{equation*}

Let $k=2^{m}- 2$. Since $\vec{k} = (0,1, 1, \ldots,1,1, 0, 0, \ldots )^\top $, we have
\begin{align*}
    \vec{x}_{k,1} & = \left(0,1, 1, \ldots,1,1, 0, 0, \ldots \right)^{\top}\quad \text{and} \\
    \vec{x}_{k,2} & = L \times P \times \left(0,1, 1, \ldots,1,1, 0, 0, \ldots  \right)^{\top} \\
    & = L \times \left(P_{1,1}+ \sum_{j=1}^{m}P_{1,j}, P_{2,1}+ \sum_{j=1}^{m}P_{2,j}, \ldots, P_{m,1}+ \sum_{j=1}^{m}P_{m,j}, \ldots \right)^{\top}\\
    & = L (1,0,\ldots,0,1, 0, 0, \ldots  )^\top \\ 
    & = (\ell_{1,1}, \ell_{2,1}, \ldots, \ell_{m-1,1}, \ell_{m,1} \oplus \ell_{m,m}, \ell_{m+1,1} \oplus \ell_{m+1,m}, \ldots )^\top .
\end{align*}
Thus 
\[ \bsx_k=\left( \frac{1}{2}-\frac{1}{2^m}, \frac{\ell_{1,1}}{2} + \cdots + \frac{\ell_{m-1,1}}{2^{m-1}} + \frac{\ell_{m,1} \oplus \ell_{m,m}}{2^m} + \frac{\ell_{m+1,1} \oplus \ell_{m+1,m}}{2^{m+1}} + \cdots  \right),\]
which shows that
\[ q_\infty(Q_{2^m})\leq \frac{1}{2}\left\| \bsx_1-\bsx_k\right\|_{\infty} \le \frac{1}{2^{m}}. \]

Thus, the corresponding digital $(0,2)$-sequence in base $2$, either with $(C_1,C_2)=(L,P)$ or $(C_1,C_2)=(I,LP)$, is not well-separated.
\end{proof}

\subsection{Faure sequence}\label{sec:Faure}

In this subsection, we show that the Faure sequence is not quasi-uniform.
Since the Faure sequence in base $2$ is the same as the Sobol' sequence in dimension $d=2$, this result also generalizes the one shown in \cite{G24a}.

Let $P$ be the upper triangular Pascal matrix as given in \eqref{eq:pascal}, now considered over $\FF_b$ for a prime $b$.
The $b$-dimensional Faure sequence in base $b$ is a digital $(0,b)$-sequence in base $b$ with the generating matrices
\[
I,P,P^2, \dots, P^{b-1}.
\]
It is shown in \cite{Faure1982dds} that the entries of the generating matrices are given by
\begin{equation}\label{eq:Faure_matrix_elements}
(P^k)_{i,j} \equiv k^{j-i}\binom{j-1}{i-1} \pmod{b}
\end{equation}
if $j \ge i$ and otherwise $0$.

On the separation radius, we have the following.
\begin{theorem}\label{thm:Faure-near-points}
Let $b$ be a prime and $(\bsx_i)_{i \ge 0} \subset [0,1)^b$ be the Faure $(0,b)$-sequence in base $b$.
Let $w$ be a positive integer, $m = (b-1)b^w$ and
$n := b^m - b$.
Then we have
\[
\|\bsx_1 - \bsx_n\|_\infty \le b^{-(b^w-1)}.
\]
In particular,
the separation radius of the set
$Q_{b^m} := \{\bsx_0, \dots, \bsx_{b^m-1}\}$ is bounded from above by
\[
q_\infty(Q_{b^m}) \le \frac{1}{2}\|\bsx_1 -\bsx_n\|_\infty
\le \frac{b}{2}(b^{m})^{-\tfrac{1}{b-1}}.
\]
\end{theorem}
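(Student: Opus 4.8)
The plan is to exhibit the two nearby points $\bsx_1$ and $\bsx_n$ explicitly at the level of their base-$b$ digit vectors and to show that, coordinatewise, they agree in their first $b^w-1$ digits; since two numbers agreeing in their first $t$ base-$b$ digits differ by at most $b^{-t}$, this immediately gives $\|\bsx_1-\bsx_n\|_\infty \le b^{-(b^w-1)}$. First I would compute $\bsx_1$: since $\vec{1}=(1,0,0,\dots)^\top$, the digit vector of the $j$-th coordinate is the first column of $P^{j-1}$, and \eqref{eq:Faure_matrix_elements} shows that the first column of every $P^k$ equals $(1,0,0,\dots)^\top$ because $\binom{0}{i-1}=0$ for $i\ge 2$. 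Hence $\bsx_1=(1/b,\dots,1/b)$, whose expansion in each coordinate is $0.1\,0\,0\cdots$.

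Next, for $n=b^m-b$ the base-$b$ digits are $n_0=0$ and $n_i=b-1$ for $1\le i\le m-1$, so over $\FF_b$ the input column is $-\mathbf{1}_{[2,m]}$, where $\mathbf{1}_{[a,c]}$ denotes the vector with $1$ in entries $a,\dots,c$ and $0$ elsewhere. For the first coordinate (matrix $I$) this yields directly $x_{n,1}=1/b-1/b^m$, already within $b^{-(b^w-1)}$ of $1/b$ because $m=(b-1)b^w\ge b^w-1$. For a coordinate $j\ge 2$ with $k:=j-1\in\{1,\dots,b-1\}$, I would use $P^{k}\vec e_1=\vec e_1$ (first column) to write the digit vector as $\vec e_1-P^{k}\mathbf{1}_{[1,m]}$ modulo $b$, so that its $i$-th digit equals $\delta_{i,1}-S_i^{(k)}\pmod b$, where $S_i^{(k)}:=\sum_{\ell=1}^{m}(P^{k})_{i,\ell}$ is a partial column sum. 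The whole theorem then reduces to the arithmetic claim that $S_i^{(k)}\equiv 0\pmod b$ for all $1\le k\le b-1$ and $1\le i\le b^w-1$: granting it, coordinate $j\ge 2$ has digit $1$ in position $1$ and digit $0$ in positions $2,\dots,b^w-1$, matching the expansion of $1/b$. Taking the maximum over $j$ gives $\|\bsx_1-\bsx_n\|_\infty\le b^{-(b^w-1)}$, and since the separation radius is half the minimal pairwise distance, $q_\infty(Q_{b^m})\le \tfrac12\|\bsx_1-\bsx_n\|_\infty$; the stated form follows from the identity $\tfrac12 b^{-(b^w-1)}=\tfrac{b}{2}(b^m)^{-1/(b-1)}$, which holds because $b^m=b^{(b-1)b^w}$.

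The main obstacle is the arithmetic claim, and it is precisely where the shape $m=(b-1)b^w$ enters. For $k=1$ a hockey-stick identity gives $S_i^{(1)}=\binom{m}{i}$, which vanishes mod $b$ for $1\le i\le b^w-1$ by Lucas's theorem, but general $k$ needs more. I would first establish, via Pascal's rule $\binom{\ell-1}{i-1}=\binom{\ell}{i}-\binom{\ell-1}{i}$, the recurrence
\[
S_i^{(k)}=(1-k)S_{i+1}^{(k)}+k^{m-i}\binom{m}{i},
\]
whose downward solution is the closed form $S_i^{(k)}=\sum_{s=i}^{m}\binom{m}{s}(1-k)^{s-i}k^{m-s}$. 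Since $m$ has a single nonzero base-$b$ digit $b-1$ in position $w$, Lucas's theorem shows $\binom{m}{s}\equiv 0\pmod b$ unless $s$ is a multiple of $b^w$, and $\binom{m}{ub^w}\equiv\binom{b-1}{u}$. Thus for $i\le b^w$ only the terms $s=ub^w$ with $1\le u\le b-1$ survive, and factoring out $(1-k)^{b^w-i}$ gives $S_i^{(k)}\equiv(1-k)^{b^w-i}S_{b^w}^{(k)}$.

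Finally, using the Frobenius identity $a^{b^w}=a$ on $\FF_b$ to replace each $a^{ub^w}$ by $a^{u}$, together with the binomial theorem, I would compute
\[
(1-k)S_{b^w}^{(k)}\equiv\bigl((1-k)+k\bigr)^{b-1}-k^{b-1}=1-k^{b-1}\equiv 0\pmod b
\]
by Fermat's little theorem (valid as $k\not\equiv 0$). Because $b^w-i\ge 1$ whenever $i\le b^w-1$, the surviving factor $(1-k)S_{b^w}^{(k)}\equiv 0$ forces $S_i^{(k)}\equiv 0$, which also covers the case $k=1$ (where $1-k=0$ makes the vanishing immediate). I expect the careful bookkeeping in the recurrence-to-closed-form step and the simultaneous use of Lucas, Frobenius, and Fermat to be the most delicate parts of the argument.
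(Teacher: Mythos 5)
Your proposal is correct, and its overall skeleton coincides with the paper's: you identify $\bsx_1=(1/b,\dots,1/b)$, write the digit vector of the $j$-th coordinate of $\bsx_n$ as $\vec e_1$ minus $P^{j-1}$ applied to the all-ones vector on positions $1,\dots,m$ (the paper decomposes $\vec n$ in the same way), and thereby reduce everything to the congruence $S_i^{(k)}:=\sum_{\ell=1}^m (P^{k})_{i,\ell}\equiv 0\pmod b$ for $1\le k\le b-1$ and $1\le i\le b^w-1$, which is exactly the paper's lemma on accumulated row sums of the Pascal matrices. Where you genuinely diverge is in the proof of that congruence. The paper splits the column range $\{1,\dots,m\}$ into $b-1$ blocks of length $b^w$, uses the Lucas-type reduction $\binom{cb^w+t}{u}\equiv\binom{t}{u}$ to show each block contributes a common factor times $k^{cb^w}$, and kills the sum with the geometric-series identity $\sum_{c=0}^{b-2}k^{cb^w}\equiv 0$. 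You instead derive, via Pascal's rule, the recurrence $S_i^{(k)}=(1-k)S_{i+1}^{(k)}+k^{m-i}\binom{m}{i}$, solve it to the closed form $S_i^{(k)}=\sum_{s=i}^m\binom{m}{s}(1-k)^{s-i}k^{m-s}$, apply Lucas to $\binom{m}{s}$ so that only $s=ub^w$ survives, and finish with Frobenius and the binomial theorem via $(1-k)S_{b^w}^{(k)}\equiv 1-k^{b-1}\equiv 0$. I checked the recurrence, the closed form, and the factorization $S_i^{(k)}\equiv(1-k)^{b^w-i}S_{b^w}^{(k)}$; all are correct, and your route has the minor advantages of treating $k=1,\dots,b-1$ uniformly (the paper splits off $k=1$ via the hockey-stick identity) and of explicitly covering the $b$-th coordinate, which the paper's write-up of the theorem nominally stops short of. The two proofs of the key lemma are comparable in length; yours trades the paper's column-block decomposition for a closed form obtained by solving a first-order recurrence.
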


Since the separation radius decays with order $(b^m)^{-\tfrac{1}{b-1}}$, which is faster than the optimal, desired rate $(b^m)^{-\tfrac{1}{b}}$, by combining Theorem~\ref{thm:Faure-near-points} and \eqref{eq:CoveringRadius-tmsnet}, we obtain the following main result in this section.

\begin{corollary}
The Faure $(0,b)$-sequence in base $b$ is not a quasi-uniform sequence.
\end{corollary}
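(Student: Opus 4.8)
The plan is to prove Theorem~\ref{thm:Faure-near-points}, which isolates two specific indices of the Faure sequence whose images are forced to be extremely close in the $\ell_\infty$ norm. The target statement (the Corollary) then follows immediately: once we know $q_\infty(Q_{b^m}) \le \tfrac{b}{2}(b^m)^{-1/(b-1)}$ while the covering radius obeys the lower bound $h_\infty(Q_{b^m}) \ge c\, (b^m)^{-1/b}$ from the volume argument in the first Remark (with $N = b^m$, $d = b$), the mesh ratio $\rho_\infty(Q_{b^m}) = h_\infty/q_\infty$ grows at least like $(b^m)^{1/(b-1) - 1/b} = (b^m)^{1/(b(b-1))} \to \infty$ as $m \to \infty$. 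Since quasi-uniformity requires the mesh ratio to stay bounded along every subsequence (and here we have a full subsequence $m = (b-1)b^w$ on which it blows up), the sequence cannot be quasi-uniform. So the entire content lies in establishing the distance bound of Theorem~\ref{thm:Faure-near-points}.

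To prove the distance bound I would mimic the computation already carried out for the $(0,2)$-sequence case in the previous theorem, but now tracking the Pascal-matrix entries over $\FF_b$ via the explicit formula \eqref{eq:Faure_matrix_elements}. The first coordinate of $\bsx_1$ is generated by $I$, so $\vec{x}_{1,1} = (1,0,0,\ldots)^\top$, giving $x_{1,1} = 1/b$; the remaining coordinates of $\bsx_1$ are the first columns of $P, P^2, \ldots, P^{b-1}$. For the second index $n = b^m - b$, whose base-$b$ digit vector $\vec{n}$ has a long block of digits $(b-1)$ (specifically $\vec{n} = (0, b-1, b-1, \ldots, b-1, 0, \ldots)^\top$ with the nonzero block in positions $1$ through $m-1$), I would compute each coordinate $\vec{x}_{n,r} = P^r \vec{n}$ for $r = 0, 1, \ldots, b-1$. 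The key is to evaluate the partial column sums $\sum_{j=1}^{m-1}(P^r)_{i,j}$ modulo $b$ and show that, for the chosen $m = (b-1)b^w$, these collapse so that $\bsx_n$ agrees with $\bsx_1$ in every coordinate up to precision $b^{-(b^w - 1)}$. Concretely I expect each coordinate difference $x_{1,r} - x_{n,r}$ to vanish in its first $b^w - 1$ base-$b$ digits, which yields $\|\bsx_1 - \bsx_n\|_\infty \le b^{-(b^w-1)}$; translating $b^w - 1$ back through $m = (b-1)b^w$ gives the exponent $b^{-(b^w-1)} \le b\,(b^m)^{-1/(b-1)}$ after the elementary estimate $b^w - 1 \ge m/(b-1) - 1$.

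The main obstacle is the modular-arithmetic bookkeeping of the partial Pascal sums $\sum_{j=1}^{m-1} k^{j-i}\binom{j-1}{i-1} \pmod b$ for each power $k = 0,1,\ldots,b-1$ simultaneously. Unlike the base-$2$ case, where a single application of Lucas's theorem to $\binom{2^w}{i}$ did the job, here I must control a whole family of weighted sums indexed by $r = k$, and ensure they all have the same low-order digit behavior. The natural tool is Lucas's theorem again, applied to the binomial coefficients $\binom{j-1}{i-1}$ together with the fact that $m = (b-1)b^w$ makes the relevant "boundary" index $m$ have a clean base-$b$ expansion (digit $b-1$ in position $w$ and zeros elsewhere), forcing carries and cancellations. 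I would organize this by showing that for each $r$ the vector $P^r \vec{n}$ has its first $b^w - 1$ entries equal to those of $P^r(1,0,\ldots,0)^\top = $ (first column of $P^r$), which is exactly the contribution producing $\bsx_1$; the telescoping identity $\vec{n} = (1,0,\ldots)^\top - (\text{tail}) \pmod{\text{shift}}$ analogous to $b^{m}-b = (b^{m-1}+\cdots) $ should make the high-order agreement transparent once the Lucas computation is in place. The delicate point to get right is the precise exponent $b^w - 1$ rather than an off-by-one weaker bound, since that is what forces the suboptimal decay rate $(b^m)^{-1/(b-1)}$ strictly faster than the optimal $(b^m)^{-1/b}$.
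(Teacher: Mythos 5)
Your proposal is correct and follows essentially the same route as the paper: the corollary is reduced to Theorem~\ref{thm:Faure-near-points}, whose proof proceeds exactly as you outline, via the decomposition $\vec{n}=(\overbrace{b-1,\dots,b-1}^{m},0,\dots)^\top-(b-1,0,\dots)^\top$, Lucas's theorem (plus the hockey-stick identity and Fermat's little theorem to handle the weighted sums $\sum_j k^{j-i}\binom{j-1}{i-1}$ for all powers $P^k$ simultaneously), and the clean base-$b$ structure of $m=(b-1)b^w$. Your explicit appeal to the universal volume-based lower bound on the covering radius to conclude that $\rho_\infty(Q_{b^m})\to\infty$ is exactly the intended (and correct) final step.
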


Note that this result, for instance, does not imply that the Faure $(0,d)$-sequence for $2 < d < b$ in base $b$ is not quasi-uniform.

To prove Theorem~\ref{thm:Faure-near-points},
we first give some technical lemmas.

\begin{lemma}\label{lem:exp_sum}
Let $2 \le k \le b-1$ and $w \ge 1$ be integers.
Then we have
\[
\sum_{c=0}^{b-2}k^{cb^w}
\equiv 0 \pmod{b}.
\]
\end{lemma}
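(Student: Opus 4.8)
The plan is to collapse the exponential sum into an ordinary finite geometric series modulo $b$ by exploiting Fermat's little theorem. First I would note that, since $b$ is prime and $2 \le k \le b-1$, the integer $k$ is invertible modulo $b$, so Fermat's little theorem gives $k^{b-1} \equiv 1 \pmod{b}$; consequently the residue $k^e \bmod b$ depends only on $e \bmod (b-1)$. The crucial arithmetic observation is that $b \equiv 1 \pmod{b-1}$, and hence $b^w \equiv 1^w = 1 \pmod{b-1}$ for every $w \ge 1$. Reducing the exponent this way yields $k^{b^w} \equiv k \pmod{b}$, and therefore $k^{cb^w} = (k^{b^w})^{c} \equiv k^{c} \pmod{b}$ for each $c$.

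Substituting this term by term reduces the claim to evaluating a standard geometric sum:
\[
\sum_{c=0}^{b-2} k^{cb^w} \equiv \sum_{c=0}^{b-2} k^{c} \pmod{b}.
\]
Because $2 \le k \le b-1$ forces $k \not\equiv 1 \pmod{b}$, the quantity $k-1$ is invertible modulo $b$, so I may use the closed form $\sum_{c=0}^{b-2} k^{c} = (k^{b-1}-1)/(k-1)$. Applying Fermat's little theorem once more, the numerator satisfies $k^{b-1}-1 \equiv 0 \pmod{b}$, while $k-1$ is a unit, so the entire expression vanishes modulo $b$. This gives $\sum_{c=0}^{b-2} k^{cb^w} \equiv 0 \pmod{b}$, which is the desired conclusion.

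There is no deep obstacle in this argument; the only points requiring care are the two divisibility side-conditions, and both are supplied precisely by the hypothesis $2 \le k \le b-1$. This range guarantees simultaneously that $k$ is a unit modulo $b$ (so that Fermat's little theorem applies and the exponent reduction $b^w \mapsto 1$ is legitimate) and that $k-1$ is a unit modulo $b$ (so that the geometric-series formula is valid over $\FF_b$). I would therefore invoke the hypothesis explicitly at each of these two steps, since the conclusion can fail if $k \equiv 0$ or $k \equiv 1 \pmod{b}$ is permitted.
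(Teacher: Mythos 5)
Your proof is correct and follows essentially the same route as the paper: both reduce $k^{cb^w}$ to $k^c$ via Fermat's little theorem (the paper iterates $k^b \equiv k \pmod{b}$, you reduce the exponent modulo $b-1$, which is the same fact) and then evaluate the geometric sum $\sum_{c=0}^{b-2}k^c = (k^{b-1}-1)/(k-1) \equiv 0 \pmod{b}$ using $k \not\equiv 1 \pmod{b}$. Your explicit attention to why $k$ and $k-1$ are units modulo $b$ is a welcome bit of extra care but does not change the argument.
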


\begin{proof}
By Fermat's little theorem,
we have $k^{b} \equiv k \pmod{b}$ and thus $k^{b^w} \equiv k \pmod{b}$.
Hence
\[
\sum_{c=0}^{b-2}k^{cb^w}
\equiv
\sum_{c=0}^{b-2}k^{c}
= \dfrac{k^{b-1}-1}{k-1}
\equiv 0 \pmod{b},
\]
where, in the last equivalence, we use $k \not \equiv 1 \pmod{b}$ and Fermat's little theorem again.
\end{proof}

\begin{lemma}\label{lem:Lucas-modified}
Let $b$ be a prime and
$0 \le c < b$, $0 \le t < b^w$ and $0 \le u < b^w$ be integers.
Then we have
\[
\binom{cb^w + t}{u}
\equiv \binom{t}{u} \pmod{b}.
\]
\end{lemma}

\begin{proof}
Let us denote the $b$-adic expansions of $cb^w+t$ and $u$ by
\[ cb^w+t = v_0+v_1 b+\cdots + v_Lb^{L} \quad \text{with $v_0,v_1,\ldots,v_L\in \{0,\ldots,b-1\}$,}\]
and
\[ u = u_0+u_1 b+\cdots + u_Lb^{L} \quad \text{with $u_0,u_1,\ldots,u_L\in \{0,\ldots,b-1\}$,}\]
respectively, where we permit leading zeros in the expansions.
By the assumptions, $v_0+v_1 b+\cdots + v_{w-1}b^{w-1}$ is equal to the $b$-adic expansion of $t$
and $u_L = u_{L-1} \cdots = u_{w} = 0$ holds.
Thus, by using Lucas's theorem twice, we have
\[
\binom{cb^w + t}{u}
\equiv \prod_{i=0}^{L} \binom{v_i}{u_i}
\equiv \prod_{i=0}^{w-1} \binom{v_i}{u_i}
\equiv \binom{t}{u} \pmod{b}.
\]
Thus we are done.
\end{proof}

\begin{lemma}\label{lem:Faure_acc_sum}
Let $w \ge 0$ be an integer and $m=(b-1)b^w$.
Then the following holds true.
\begin{enumerate}
\item[\normalfont (i)]
For any integer $1 \le i \le b^w-1$, we have
\[
\sum_{j=1}^{m} P_{i,j} \equiv 0 \pmod{b}.
\]
\item[\normalfont (ii)]
For any integers $2 \le k \le b-1$ and $1 \le i \le b^w$,
we have
\[
\sum_{j=1}^{m} (P^k)_{i,j} \equiv 0 \pmod{b}.
\]

\end{enumerate}

\end{lemma}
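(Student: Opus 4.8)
The plan is to prove both parts by reducing the sums over $j$ to binomial sums that vanish modulo $b$, exploiting the explicit formula \eqref{eq:Faure_matrix_elements} for the Pascal-power entries together with the two preceding technical lemmas. For part (i), I would write $\sum_{j=1}^{m} P_{i,j} = \sum_{j=1}^{m} \binom{j-1}{i-1}$, shift the index by setting $s=j-1$ so the sum runs over $0 \le s \le m-1$, and use the hockey-stick identity $\sum_{s=i-1}^{m-1}\binom{s}{i-1} = \binom{m}{i}$. Thus the claim reduces to showing $\binom{m}{i} \equiv 0 \pmod{b}$ for $1 \le i \le b^w-1$, where $m=(b-1)b^w$. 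I would verify this via Lucas's theorem: the $b$-adic expansion of $m = (b-1)b^w$ has digit $b-1$ in position $w$ and zeros in positions $0,\dots,w-1$, while any $i$ with $1 \le i \le b^w-1$ has a nonzero digit in some position below $w$, forcing a factor $\binom{0}{i_r}=0$ in the Lucas product. This is the clean case and should present no difficulty.

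For part (ii), the analogous manipulation gives $\sum_{j=1}^{m}(P^k)_{i,j} \equiv \sum_{j=i}^{m} k^{j-i}\binom{j-1}{i-1} \pmod{b}$. The presence of the extra factor $k^{j-i}$ breaks the direct hockey-stick approach, so here I would group the summation index $j-1$ according to its residue modulo $b^w$. Writing $j-1 = cb^w + t$ with $0 \le c \le b-2$ and $0 \le t < b^w$ (which exactly covers $0 \le j-1 \le m-1$ since $m=(b-1)b^w$), I would apply Lemma~\ref{lem:Lucas-modified} to replace $\binom{cb^w+t}{i-1}$ by $\binom{t}{i-1}$ modulo $b$ (valid because $i-1 < b^w$), and simultaneously reduce the exponent $k^{j-i}=k^{cb^w+t-i+1}$ using $k^{b^w}\equiv k \pmod b$ from Fermat. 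The plan is to factor the double sum as
\[
\sum_{c=0}^{b-2}\sum_{t=0}^{b^w-1} k^{cb^w+t-i+1}\binom{t}{i-1}
\equiv \left(\sum_{c=0}^{b-2} k^{cb^w}\right)\left(\sum_{t=0}^{b^w-1} k^{t-i+1}\binom{t}{i-1}\right) \pmod b,
\]
so that the $c$-sum separates out.

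The main obstacle, and the step I expect to require the most care, is the separation of variables in the double sum: one must check that the factor $k^{cb^w}$ genuinely decouples from the inner $t$-sum so that Lemma~\ref{lem:exp_sum} can be invoked to annihilate the whole expression. This works precisely because, after applying Lemma~\ref{lem:Lucas-modified}, the inner summand depends on $c$ only through the overall exponent, and $k^{cb^w+t} = k^{cb^w}\cdot k^{t}$ factors cleanly over $\FF_b$. Once the factorization is justified, Lemma~\ref{lem:exp_sum} gives $\sum_{c=0}^{b-2}k^{cb^w}\equiv 0 \pmod b$ for every $k$ in the range $2 \le k \le b-1$, and the entire sum vanishes regardless of the value of the inner $t$-sum. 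I would conclude by noting that the boundary range $1 \le i \le b^w$ is handled uniformly, since for $i=b^w$ the inner binomial coefficients are supported only at $t=b^w-1$ and the same exponent-separation argument applies.
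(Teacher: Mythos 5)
Your proposal is correct and follows essentially the same route as the paper: part (i) via the hockey-stick identity reducing to $\binom{m}{i}\equiv 0 \pmod b$ (the paper checks this via Lemma~\ref{lem:Lucas-modified} rather than invoking Lucas's theorem directly, which is only a cosmetic difference), and part (ii) via the decomposition $j-1=cb^w+t$, Lemma~\ref{lem:Lucas-modified}, factoring out $\sum_{c=0}^{b-2}k^{cb^w}$, and Lemma~\ref{lem:exp_sum}. The Fermat reduction of the exponent you mention is unnecessary (the factorization $k^{cb^w+t-i+1}=k^{cb^w}k^{t-i+1}$ already decouples the sums), but it does no harm.
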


\begin{proof}
By applying \eqref{eq:Faure_matrix_elements}, the hockey-stick identity
and Lemma~\ref{lem:Lucas-modified} with $(c,t,u) =(b-1,0,i)$ in order,
we have
\[
\sum_{j=1}^{m} P_{i,j}
= \sum_{j=i}^{m} \binom{j-1}{i-1}
= \binom{m}{i}
\equiv \binom{0}{i}
\equiv 0 \pmod{b}
\] 
for any $1 \le i \le b^w-1$.
Thus we have shown the first claim.

Let us now assume $2 \le k \le b-1$ and $1 \le i < b^w$.
For $1 \le j \le m$, let $j-1=c(j)b^w+t(j)$ with $0 \le c(j) \le b-2$ and $0 \le t(j) < b^w$.
By Lemma~\ref{lem:Lucas-modified} we have
\[
\binom{j-1}{i-1} \equiv \binom{t(j)}{i-1} \pmod{b}
\]
and, in particular, this equals $0$ modulo $b$ if $0 \le t(j) < i-1$.
Thus it follows from \eqref{eq:Faure_matrix_elements} that
\begin{align*}
\sum_{j=1}^{m} (P^k)_{i,j}
&\equiv \sum_{c=0}^{b-2}\sum_{t=i-1}^{b^{w} -1} k^{cb^w+t+1-i}\binom{t}{i-1}\\
&\equiv \sum_{c=0}^{b-2}k^{cb^w}\sum_{t=i-1}^{b^{w}-1} k^{t+1-i}\binom{t}{i-1}
\equiv 0
\pmod{b},
\end{align*}
where we used Lemma~\ref{lem:exp_sum} in the last equivalence.
Thus we have proved the second claim.
\end{proof}

We now give a proof of Theorem~\ref{thm:Faure-near-points}.

\begin{proof}[Proof of Theorem~\ref{thm:Faure-near-points}]
We compute each coordinate of $\bsx_n = (x_{n,1},\dots,x_{n,b})$.
For the first coordinate,
since $\vec{n} = (\underbrace{0,b-1,\dots,b-1}_{m}, 0, 0, \ldots)^\top$
we have
\begin{equation}\label{eq:Faurethm-1}
x_{n,1} = 0 + \dfrac{b-1}{b^2} + \cdots + \dfrac{b-1}{b^m} = \dfrac{1}{b} - \dfrac{1}{b^m}.    
\end{equation}

For the $k$-th coordinate with $2 \le k \le b-1$,
by Lemma~\ref{lem:Faure_acc_sum} we have
\[
P^{k-1} (\overbrace{ b-1,\dots,b-1}^{m}, 0, 0, \ldots)^\top 
= \begin{cases}
(\overbrace{0,\dots,0}^{b^w-1},*,*,\dots)^\top & \text{if }k=2,\\
(\overbrace{0,\dots,0}^{b^w},*,*,\dots)^\top & \text{if } 3 \le k \le b-1,
\end{cases}
\]
where $*$ represents some integer between $0$ and $b-1$.
Further we have
\[
P^{k-1}  (b-1,0,0,\dots )^\top 
= ((b-1)P^{k-1}_{1,1},0,0,\dots )^\top
= (b-1,0,0,\dots )^\top.
\]
Using these equalities, for $k=2$ we have
\begin{align*}
P^{k-1} \vec{n}
= (\overbrace{0,\dots,0}^{b^w-1},*,*,\dots)^\top \ominus (b -1,0,0\dots)^\top
= (1,\overbrace{0,\dots,0}^{b^w-2},*,*,\dots)^\top
\end{align*}
and thus we have
\begin{equation}\label{eq:Faurethm-2}
\dfrac{1}{b} \le x_{n,2} \le \dfrac{1}{b} + \sum_{i=b^w}^{m} \dfrac{b-1}{b^i}
< \dfrac{1}{b} + \dfrac{1}{b^{b^w-1}}.
\end{equation}
Similarly, for $3 \le k \le b-1$ we have
\begin{equation}\label{eq:Faurethm-k}
\dfrac{1}{b} \le x_{n,k} < \dfrac{1}{b} + \dfrac{1}{b^{b^w}}.    
\end{equation}
Since $\bsx_1 = (1/b, \dots, 1/b)$,
by combining \eqref{eq:Faurethm-1}, \eqref{eq:Faurethm-2} and \eqref{eq:Faurethm-k}
we obtain the desired result.
\end{proof}

\subsection{Some Fibonacci polynomial lattice}\label{sec:Fib-poly-lat}

Polynomial lattice point sets are digital nets whose construction is based on rational functions over finite fields,
which are analogous to lattice point sets \cite{N92}.
Since the Fibonacci lattices are known to be low-discrepancy and will also be proven to be quasi-uniform in \cite{DGLPSxx},
it is natural to ask whether the Fibonacci polynomial lattices, their polynomial counterparts, are also quasi-uniform (they are known to be low-discrepancy).
In this section, we show this is not always true, 
since the mesh ratios of some of them
in base~$2$ are not uniformly bounded.

The polynomial lattice point set in base $2$ for $d=2$ is defined as follows.
\begin{definition}\label{def:polylat}
Let $m\in \NN$.
For an integer $0 \le n < 2^m$ with binary expansion $n = n_0 + 2n_1 + \cdots + 2^{m-1}n_{m-1}$,
let $n(x) = n_0 + n_1x + \cdots + n_{m-1}x^{m-1} \in \FF_2[x]$.
For polynomials $p(x),q_1(x),q_2(x) \in \FF_2[x]$ with $m := \deg(p)$,
the polynomial lattice point set in base $2$, denoted by $Q(p,(q_1,q_2))$, is given by
$\{\bsx_n \mid 0 \le n < 2^m\}$ with
\[
\bsx_n := \left( \nu_m \left( \frac{n(x)q_1(x)}{p(x)}\right), \nu_m \left( \frac{n(x)q_2(x)}{p(x)}\right) \right),
\]
where we write
\[
\nu_m\left( \sum_{i=w}^{\infty}t_i x^{-i}\right) = \sum_{i=\max(w,1)}^{m}t_i b^{-i}\in [0,1).
\]
\end{definition}
We use the following properties \cite[Section~4.4]{N92}.
The polynomial lattice point set $Q(p,(q_1,q_2))$ is a digital net. The entries of the generating matrices $F_1,F_2$ are given by
\begin{equation}\label{eq:genmat_polylat}
(F_j)_{k,\ell} = u_{k+\ell -1}^{(j)} \qquad (1 \le k,\ell \le m,\, j=1,2),
\end{equation}
where the $u_i^{(j)}$'s are defined by the expansion in the set of formal Laurent series $\FF_{2}((x^{-1}))$ as
\[
\dfrac{q_j(x)}{p(x)} = \sum_{i=w_j}^{\infty}u_i^{(j)} x^{-i}.
\]
Furthermore, let $p(x),q(x) \in \FF_2[x]$ with $\gcd(p,q) = 1$.
Assume that the continued fraction expansion is given by
\[
\dfrac{q(x)}{p(x)} = [a_0(x);a_1(x),\dots,a_{\ell}(x)]
\]
and let $A := \max(\deg(a_1), \dots, \deg(a_\ell))$.
Then the $t$-value of the polynomial lattice $Q(p, (1,q))$
is equal to $A - 1$ (see \cite[Theorem~4.46]{N92}).

To define the Fibonacci polynomial lattices, we define the Fibonacci polynomials over $\FF_2$
recursively by
\begin{equation}\label{eq:fibpoly}
f_1(x) = 1,
\qquad f_2(x) = x,
\qquad f_{n+2}(x) = xf_{n+1}(x) + f_n(x).    
\end{equation}
Note that $\deg(f_n) = n-1$.
It is well-known (see, e.g., \cite[(3.3)]{B70}) that
\begin{equation}\label{eq:Fib-iter}
f_{m+n}(x) = f_{m+1}(x)f_{n}(x) + f_{m}(x)f_{n-1}(x)
\end{equation}
holds for any integers $m,n$.
By using Eq.~\eqref{eq:Fib-iter} with $m+1=n=2^k$ and $m=n=2^k$,
we can show the following lemma by induction.
\begin{lemma}
Let $k \ge 1$ be an integer.
Then we have
\begin{align}\label{eq:fib_poly}
f_{2^{k}-1}(x) = \sum_{\ell=1}^k x^{2^k - 2^\ell} \quad \text{and}\quad f_{2^{k}}(x) & = x^{2^k-1}. 
\end{align}
\end{lemma}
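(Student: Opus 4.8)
The plan is to prove both identities simultaneously by induction on $k$, using precisely the two instances of the iteration formula \eqref{eq:Fib-iter} suggested in the text. For the base case $k=1$ I would simply read off from the definition \eqref{eq:fibpoly} that $f_1 = 1 = x^{0} = \sum_{\ell=1}^{1} x^{2-2^\ell}$ and $f_2 = x = x^{2^1-1}$. For the inductive step I assume both formulas hold at level $k$ and derive them at level $k+1$.

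First I would establish the odd-index formula $f_{2^{k+1}-1}(x) = \sum_{\ell=1}^{k+1} x^{2^{k+1}-2^\ell}$. Applying \eqref{eq:Fib-iter} with $m+1=n=2^k$ (so that $m=2^k-1$ and $n=2^k$) gives
\[
f_{2^{k+1}-1} = f_{2^k}^2 + f_{2^k-1}^2.
\]
The key point is that over $\FF_2$ the Frobenius map is a ring homomorphism, so squaring is additive: $\bigl(\sum_\ell x^{a_\ell}\bigr)^2 = \sum_\ell x^{2a_\ell}$. Substituting the induction hypotheses $f_{2^k} = x^{2^k-1}$ and $f_{2^k-1} = \sum_{\ell=1}^k x^{2^k-2^\ell}$, the first term becomes $x^{2^{k+1}-2}$ (which is the $\ell=1$ summand), while squaring the second doubles every exponent and gives $\sum_{\ell=1}^k x^{2^{k+1}-2^{\ell+1}} = \sum_{\ell=2}^{k+1} x^{2^{k+1}-2^{\ell}}$ after the reindexing $\ell \mapsto \ell+1$. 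Combining the two yields exactly the claimed sum over $\ell = 1, \ldots, k+1$.

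Next I would handle the even-index formula $f_{2^{k+1}} = x^{2^{k+1}-1}$. Applying \eqref{eq:Fib-iter} with $m=n=2^k$ gives
\[
f_{2^{k+1}} = f_{2^k}\bigl(f_{2^k+1} + f_{2^k-1}\bigr).
\]
Using the defining recurrence \eqref{eq:fibpoly} in the form $f_{2^k+1} = x f_{2^k} + f_{2^k-1}$ and cancelling the repeated $f_{2^k-1}$ (since the characteristic is $2$), the bracket collapses to $x f_{2^k}$. Hence $f_{2^{k+1}} = x f_{2^k}^2 = x\cdot x^{2(2^k-1)} = x^{2^{k+1}-1}$, which closes the induction.

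The computations are short, and I expect no genuine obstacle beyond careful bookkeeping: the only place requiring attention is the characteristic-two arithmetic — specifically, exploiting that squaring distributes over sums and that doubled terms vanish — together with the reindexing $\ell \mapsto \ell+1$ that aligns the shifted exponents with the target sum.
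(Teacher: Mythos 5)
Your proof is correct and follows exactly the route the paper indicates: induction on $k$ using \eqref{eq:Fib-iter} with $m+1=n=2^k$ for the odd index and $m=n=2^k$ for the even index, with the Frobenius/characteristic-two simplifications filling in the details the paper leaves to the reader. No issues.
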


Then the Fibonacci lattice point set in base $2$ with $2^m$ points is defined to be the polynomial lattice 
\[
Q(f_{m+1}(x), (1,f_m(x))).
\]
Since we have the continued fraction expansion $f_m(x)/f_{m+1}(x) = [0;x,x,\ldots,x]$ for any $m \in \NN$,
the Fibonacci polynomial lattice point set with $2^m$ points is a digital $(0,m,2)$-net in base $2$.
On the other hand, the corresponding family of point sets turn out to be not well-separated as follows.

\begin{theorem}\label{thm:Fiblat_sep}
Let $k \ge 4$ be an integer and define $m := 2^k-1$,
$m' = 2^{k-3}-1$.
Let $Q_m = Q(f_{m+1},(1,f_m)) = \{\bsx_0, \dots,\bsx_{2^{m}-1}\}$ be the Fibonacci polynomial lattice with $2^m$ points. 
Then,
for $n = 2^{m-m'-2} - 2^{m'-1}$ and
$n' = 2^{m-m'-2} + 2^{m'-1}$
we have
\[
\|\bsx_{n}-\bsx_{n'}\|_{\infty}
= 2^{-m+m'}.
\]
In particular, by letting $N=2^m$ be the number of points in $Q_m$, the separation radius of $Q_m$ is of order
\[
q_\infty(Q_m) \le 2^{-15/8} N^{-7/8}. 
\]
\end{theorem}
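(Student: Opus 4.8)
The plan is to exhibit the two points explicitly and compute their difference coordinate-by-coordinate, exploiting the fact that $m+1 = 2^k$ collapses the modulus polynomial. Since $m = 2^k - 1$, the identity $f_{2^k}(x) = x^{2^k-1}$ from \eqref{eq:fib_poly} gives $p(x) = f_{m+1}(x) = x^m$. With this modulus, division by $x^m$ followed by $\nu_m$ merely reads off the coefficients of degree $0,\dots,m-1$: for each $j$ one checks that $x_{n,j} = N_j/2^m$, where $N_j := \sum_{i=0}^{m-1} g_i 2^i$ is the integer whose binary digits $g_i \in \{0,1\}$ are the coefficients of $n(x) q_j(x) \bmod x^m$. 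For the first coordinate $q_1 = 1$ and $\deg n(x) < m$, so $N_1 = n$ and therefore
\[ |x_{n,1} - x_{n',1}| = \frac{|n-n'|}{2^m} = \frac{2^{m'}}{2^m} = 2^{-(m-m')}. \]
This already forces $\|\bsx_n - \bsx_{n'}\|_\infty \ge 2^{-(m-m')}$, so it remains to bound the second coordinate by the same quantity.

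For the second coordinate I would use the closed form $f_m(x) = f_{2^k-1}(x) = \sum_{\ell=1}^k x^{m+1-2^\ell}$ from \eqref{eq:fib_poly}, together with the block shapes of $n(x)$ and $n'(x)$ obtained from the binary expansions of $n = 2^{m-m'-2}-2^{m'-1}$ and $n' = 2^{m-m'-2}+2^{m'-1}$, namely $n(x) = \sum_{i=m'-1}^{m-m'-3} x^i$ (a solid block) and $n'(x) = x^{m'-1}+x^{m-m'-2}$. It is convenient to set $s := 2^{k-3}$, so that $m = 8s-1$, $m' = s-1$, and $m-m' = 7s$. Multiplying $f_m(x)$ by each block produces, for every $\ell$, a shifted solid block $x^{m+1-2^\ell}\cdot(\text{block})$; the key step is to determine which of these survive truncation modulo $x^m$ (only $\ell \in \{k-2,k-1,k\}$ contribute to the correction term) and to add the overlapping surviving blocks over $\FF_2$, where the overlaps telescope. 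Carrying this out for $n'(x)f_m(x)$ and for the correction $(n(x)+n'(x))f_m(x)$ yields the closed forms
\[ n'(x)f_m(x) \bmod x^m = x^{8s-2}+x^{5s-2}+x^{s-2}, \qquad n(x)f_m(x)\bmod x^m = x^{8s-2}+\sum_{j=s-2}^{5s-3}x^j. \]

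Converting these coefficient patterns to integers via $\sum_{j=s-2}^{5s-3}2^j = 2^{5s-2}-2^{s-2}$ gives $N_2 = 2^{8s-2}+2^{5s-2}-2^{s-2}$ and $N_2' = 2^{8s-2}+2^{5s-2}+2^{s-2}$, whence $N_2 - N_2' = -2^{s-1} = -2^{m'}$ and $|x_{n,2}-x_{n',2}| = 2^{m'}/2^m = 2^{-(m-m')}$. Combined with the first coordinate this gives $\|\bsx_n - \bsx_{n'}\|_\infty = 2^{-(m-m')}$. Finally, since $m - m' = 7s = 7(m+1)/8$ and $N = 2^m$, I would rewrite $\tfrac12\cdot 2^{-(m-m')} = 2^{-1}2^{-7(m+1)/8} = 2^{-15/8}(2^m)^{-7/8} = 2^{-15/8}N^{-7/8}$, giving the claimed separation-radius bound. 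The main obstacle is the bookkeeping in the previous paragraph: correctly identifying, for each $\ell$, the part of the shifted block that remains after truncation at degree $m$, and tracking the mod-$2$ cancellations of the overlapping blocks. The passage to integer values is then safe, because $N_2$ and $N_2'$ are available in closed form, so the integer subtraction (with its borrows) is unambiguous.
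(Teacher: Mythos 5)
Your proposal is correct and takes essentially the same route as the paper: both exploit $f_{m+1}(x)=x^{m}$ to reduce the first coordinate to $n2^{-m}$, and then compute the second coordinates of $\bsx_n$ and $\bsx_{n'}$ explicitly from the closed form of $f_{2^k-1}$. The only difference is cosmetic---you work with the polynomial products $n(x)f_m(x)\bmod x^m$ while the paper evaluates $F_2\vec{n}$ entrywise via a parity count over the Hankel-type generating matrix---and your coefficient patterns $x^{8s-2}+x^{5s-2}+x^{s-2}$ and $x^{8s-2}+\sum_{j=s-2}^{5s-3}x^j$ check out, matching the paper's values $x_{n',2}=2^{-1}+2^{-3s-1}+2^{-7s-1}$ and $x_{n,2}=2^{-1}+2^{-3s-1}-2^{-7s-1}$.
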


This theorem directly implies the following.
\begin{corollary}
The set of the Fibonacci polynomial lattices $\{Q(f_{m+1},(1,f_m))\}_{m \in \NN}$ is not a quasi-uniform family on $\NN$.
\end{corollary}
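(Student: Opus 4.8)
The plan is to obtain the corollary as an immediate consequence of Theorem~\ref{thm:Fiblat_sep}. The single idea is that the separation radius exhibited there decays strictly faster than the universal packing rate $N^{-1/d} = N^{-1/2}$, whereas the covering radius of \emph{any} point set in $[0,1]^2$ is bounded below by a constant multiple of $N^{-1/2}$; hence the mesh ratio $\rho_\infty = h_\infty/q_\infty$ must diverge along the relevant subsequence, which by Definition~\ref{def_qu_pointsets} rules out (weak) quasi-uniformity.

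Concretely, I would first invoke the volume lower bound from the Remark on the covering radius: since covering $[0,1]^2$ by $N$ closed $\ell_\infty$-balls of radius $r$ forces $N(2r)^2 \ge 1$, every $N$-point set $P \subset [0,1]^2$ satisfies $h_\infty(P) \ge \tfrac{1}{2} N^{-1/2}$, and with $N = 2^m$ this reads
\[ h_\infty(Q_m) \ge \tfrac{1}{2}\, 2^{-m/2}. \]
Next I would insert the bound supplied by Theorem~\ref{thm:Fiblat_sep}, valid for $m = 2^k - 1$ with $k \ge 4$, namely $q_\infty(Q_m) \le 2^{-15/8} N^{-7/8} = 2^{-15/8} 2^{-7m/8}$. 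Dividing the two gives, for all such $m$,
\[ \rho_\infty(Q_m) = \frac{h_\infty(Q_m)}{q_\infty(Q_m)} \ge \frac{\tfrac{1}{2}\, 2^{-m/2}}{2^{-15/8}\, 2^{-7m/8}} = 2^{7/8}\, 2^{3m/8}. \]
Because $m = 2^k - 1 \to \infty$ as $k \to \infty$, the right-hand side is unbounded, so no constant can dominate $\rho_\infty(Q_m)$ over all $m$; the sequence therefore fails the boundedness requirement of Definition~\ref{def_qu_pointsets} and is not (even weakly) quasi-uniform. By the norm-equivalence remark, the conclusion holds for every $p \in [1,\infty]$.

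I expect no genuine obstacle in this deduction: the entire difficulty sits inside Theorem~\ref{thm:Fiblat_sep}, where one must produce the explicit near-colliding pair of indices $n = 2^{m-m'-2} - 2^{m'-1}$ and $n' = 2^{m-m'-2} + 2^{m'-1}$, presumably using the closed forms \eqref{eq:fib_poly} for $f_{2^k-1}$ and $f_{2^k}$ together with the generating-matrix description \eqref{eq:genmat_polylat}. The only point I would be careful to state is that Theorem~\ref{thm:Fiblat_sep} only controls the subsequence $m = 2^k - 1$; this is nonetheless enough, since quasi-uniformity demands a bound on $\rho_\infty(Q_m)$ uniform in $m$, and a single divergent subsequence already contradicts it.
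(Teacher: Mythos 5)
Your proposal is correct and follows essentially the same route the paper intends: Theorem~\ref{thm:Fiblat_sep} gives $q_\infty(Q_m) \le 2^{-15/8}N^{-7/8}$ along $m = 2^k-1$, the volume argument gives $h_\infty(Q_m) \ge \tfrac12 N^{-1/2}$, and the resulting divergence of $\rho_\infty(Q_m)$ on that subsequence contradicts the uniform bound required by Definition~\ref{def_qu_pointsets}. The arithmetic ($\rho_\infty(Q_m) \ge 2^{7/8}2^{3m/8}$) checks out, and your closing caveat that a single divergent subsequence suffices is exactly the right observation.
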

Note that this result does not exclude the possibility that there is a subset $I \subset \mathbb{N}$ such that $\{Q(f_{m+1}, (1, f_m)) \}_{m \in I}$ is a quasi-uniform family.

\begin{proof}[Proof of Theorem~\ref{thm:Fiblat_sep}]
In this proof, we denote by $v[i]$ the $i$-th entry of a vector $v$
and by $C[i,j]$ the $(i,j)$-th entry of a matrix $C$ for visibility.

Since the denominator of the polynomial lattice is $f_{m+1}(x) = x^{m}$ by \eqref{eq:fib_poly},
if follows from \eqref{eq:genmat_polylat} that the generating matrix of the first coordinate $F_1$ is the anti-diagonal identity matrix, i.e.,
\[
F_1 = \begin{pmatrix}
0 & \ldots & \ldots & 0 & 1 \\
0 & \ldots & 0 & 1 & 0 \\
\vdots & \iddots & \iddots & \iddots & \vdots \\
0 & 1 & 0 & \ldots & 0 \\
1 & 0 & \ldots & 0 & 0
\end{pmatrix}.
\]
Thus $x_{n,1} = n2^{-m}$ holds for any integer $n\in \{0,1,\ldots,2^m-1\}$
and we have
\[
|x_{n,1}-x_{n',1}| = 2^{-m}(n'-n) = 2^{-m+m'}.
\]

Let us move on to the second coordinate.
Since it follows from \eqref{eq:fib_poly} that
\[ \frac{f_{m}(x)}{f_{m+1}(x)}=\sum_{\ell=1}^{k}x^{-2^{\ell}+1},\]
the generating matrix of the second coordinate $F_2$ is given by
\begin{equation}\label{eq:coeff-F2}
F_2[i,j] =
\begin{cases}
1 & \text{$i+j=2,4,8,16,\cdots$ (i.e., a power of two),}\\
0 & \text{otherwise.}
\end{cases}
\end{equation}

We first compute $x_{n,2}$ via $F_2 \vec{n}$.
Considering the dyadic expansion of $n = 2^{m-m'-2} - 2^{m'-1}$,
we have $\vec{n}[j] = 1$ if and only if
$m' \le j < m-m'-1$ and $\vec{n}[j] = 0$ otherwise.
Thus it follows from \eqref{eq:coeff-F2} that,
for each $1 \le i \le m$,
$(F_2 \vec{n})[i]$ equals the parity of the number of $j$ which satisfy $m' \le j = 2^t-i < m-m'-1$ for some positive integer $t$.
For fixed $t$, under the condition $1 \le i \le m$, we have
\begin{align*}
m' \le 2^t-i < m-m'-1
\iff
\begin{cases}
i = 1 & t=k-3, \\
1 \le i \le 2^{k-3} + 1  & t=k-2, \\
1 \le i \le 2^{k-1}- 2^{k-3} + 1 & t=k-1, \\
2^{k-3} + 1 < i \le 2^k- 2^{k-3} + 1 & t=k,\\
\text{no such $i$ with $1 \le i \le m$} & \text{otherwise.}\\
\end{cases}
\end{align*}
Hence we have
\[
(F_2\vec{n})[i] = 1
\iff
i=1
\text{ or } 2^{k-1} - m'+ 1  \le i \le 2^k - m'  
\]
and thus
\[
x_{n,2}
= 2^{-1}
+ \sum_{i = 2^{k-1}-m'+1}^{2^k-m'}2^{-i}
= 2^{-1} + 
2^{-2^{k-1} + m' } - 2^{-2^{k} +m'}.
\]

We now compute $x_{n',2}$ via $F_2 \vec{n}'$.
Since we have $\vec{n}'[i] = 1$ if and only if $i = m'$ or $i=m-m'-1$,
$F_2 \vec{n}'$ is the sum of the $m'$-th column and the $(m-m'-1)$-th column of $F_2$.
From \eqref{eq:coeff-F2}, for the $m'$-th column, we have
\[
F_2[i,m'] = 1
\iff \text{$i+m'$ is a power of $2$}
\iff i = 2^{k-t}-m' \text{ for } t=0,1,2,3.
\]
Similarly, for the $(m-m'-1)$-th column we have
\[
F_2[i,m-m'-1] = 1
\iff \text{$i+m-m'-1$ is a power of $2$}
\iff i = m'+2.
\]
Thus, noting that $m'+2 = 2^{k-2}-m'$, we have
\[
(F_2 \vec{n}')[i] = 1
\iff i = 1,2^{k-1}-m',2^{k}-m'
\]
and thus
\[
x_{n',2}
= 2^{-1} + 2^{-2^{k-1}+m'} + 2^{-2^k+m'}.
\]
Combining these results,
we have 
\[
|x_{n,2}-x_{n',2}| = 2 \cdot 2^{-2^k+m'} = 2^{-m+m'}.
\]

Finally, regarding the separation radius of $Q_m$, it holds that
\[ q_{\infty}(Q_m)\leq \frac{1}{2}\|\bsx_{n}-\bsx_{n'}\|_{\infty} =  2^{-m+m'-1}=2^{-7m/8-15/8}.\]
Thus we are done.
\end{proof}

\begin{remark}
We have shown in 
Sections~\ref{sec:Faure} and \ref{sec:Fib-poly-lat} that the Faure sequence and certain sequences of Fibonacci polynomial lattice points are not quasi-uniform.
However, the values of $N$ for which the separation radius can be proved to be too small are very sparse, namely  
$N = b^{(b-1)b^w}$ in the former case and $N = 2^{2^k-1}$ in the latter.  
Thus we cannot rule out the possibility of quasi-uniformity along some subsequence.
A more refined analysis remains a topic for future research.

For the two-dimensional Sobol' sequence, the exact values of the separation radius for all $N = 2^m$ with $m \in \NN$ were recently obtained, showing that the mesh ratio is suboptimal and grows on the order of $O(N^{1/4})$ for all $N$~\cite{suzuki2025a}.  
\end{remark}

\begin{remark}
Finally, we briefly comment on possible routes toward quasi-uniform digital constructions. It is natural to ask whether standard randomization of digital nets, such as digital shifts or Owen-type scrambling, could restore quasi-uniformity.
While these operations preserve (and often improve) distributional uniformity, they do not by themselves enforce geometric separation.
Furthermore, it is well known that the separation radius of IID points is far from optimal and thus randomization may not help to increase the quasi-uniformity.
We leave a detailed investigation of these directions to future work.
\end{remark}

\bibliographystyle{plain}
\bibliography{ref.bib}

\end{document}